\documentclass[letterpaper, 10 pt, conference]{ieeeconf}  % Comment this line out if you need a4paper

\IEEEoverridecommandlockouts                              % This command is only needed if 
\usepackage{graphicx}
\usepackage{amsmath, amssymb, amsfonts}
\usepackage{cite}
\usepackage{mathtools}
\let\labelindent\relax
\usepackage{enumitem}
\usepackage[ruled,vlined]{algorithm2e}
\newtheorem{thm}{Theorem}% [section]
\newtheorem{lem}[thm]{Lemma}
\newtheorem{prp}[thm]{Proposition}
\newtheorem{cor}[thm]{Corollary}
\newtheorem{ass}[thm]{Assumption}

\newtheorem{aprp}{Proposition}% [section]

\DeclareMathOperator*{\tr}{tr}
\DeclareMathOperator*{\logdet}{log\,det}

\newcommand{\norm}[1]{\lVert #1 \rVert}                                 % ノルム
\newcommand{\inX}[2]{\in\mathbb{#1}^{#2}}                               % 集合の表記用
\newcommand{\sqmat}[4]{\begin{bmatrix}#1&#2\\#3&#4\end{bmatrix}}        % 2-2行列
\newcommand{\matvec}[2]{\begin{bmatrix}#1\\#2\end{bmatrix}}             % 2-1行列
\newcommand{\hDel}{\hat{\Delta}}                                        % データ摂動関連の略記（下二つも同様）
\newcommand{\st}{\ \mathrm{s.t.}\ }                                     % 

\newcommand{\el}{\mathcal{E}}
\newcommand{\rank}{\mathrm{rank}\,}
\newcommand{\diag}{\mathrm{diag}}
\newcommand{\Spr}[2]{\mathbb{S}^{#1,#2}_{++}}

\newcommand{\alphat}[1]{\alpha^{(#1)}}
\newcommand{\talphat}{\tilde{\alpha}^{(t+1)}}
\newcommand{\betat}{\beta^{(t)}}
\newcommand{\Rt}{R^{(t)}}
\newcommand{\Qt}{Q^{(t)}}
\newcommand{\Akt}{A_k^{(t)}}
\newcommand{\Bkt}{B_k^{(t)}}
\newcommand{\Ct}{C^{(t)}}

\mathtoolsset{showonlyrefs}

\title{\LARGE \bf Scalable Outer Approximation of Minkowski Sums\\ of Matrix Ellipsoids for Data-Driven Control}

\author{Taira Kaminaga$^{1}$ and Hampei Sasahara$^{2}$% <-this % stops a space
\thanks{*This work was supported in part by JST SPRING, Japan Grant Number JPMJSP2180, JSPS KAKENHI Grant Number JP24K17296, and JST-ASPIRE Program Grant Number JPMJAP2402.}% <-this % stops a space
\thanks{$^{1}$Taira Kaminaga is with Department of Systems and Control Engineering, Graduate School of Engineering,
Institute of Science Tokyo, Tokyo, Japan {\tt\small kaminaga.t.3734@m.isct.ac.jp}}%
\thanks{$^{2}$Hampei Sasahara is with Department of Information Physics and
Computing, Graduate School of Information Science and Technology, The University of Tokyo, Tokyo, 113-8656 Japan
        {\tt\small hsasahara@g.ecc.u-tokyo.ac.jp}}%
}

\begin{document}

\maketitle
\thispagestyle{empty}
\pagestyle{empty}

%%%%%%%%%%%%%%%%%%%%%%%%%%%%%%%%%%%%%%%%%%%%%%%%%%%%%%%%%%%%%%%%%%%%%%%%%%%%%%%%
\begin{abstract}
%In data-driven control, matrix ellipsoids—a matrix-valued extension of standard geometric ellipsoids—are increasingly utilized to bound unmeasured noise and disturbances. However, capturing diverse and complex prior knowledge often requires outer-approximating the Minkowski sum of multiple matrix ellipsoids with a single new matrix ellipsoid. While existing methods formulate this approximation using linear matrix inequalities (LMIs), the number of variables grows quadratically with the data length, rendering the computation intractable for standard optimization solvers. To overcome this severe computational challenge, this paper proposes a fast outer-approximation method that entirely circumvents massive LMI constraints by introducing a parameterized family of bounding matrix ellipsoids. For minimizing the sum of squared semi-axes, we derive an exact analytical solution. For volume minimization, we develop an efficient algorithm based on the majorization-minimization framework. By constructing a surrogate function via the first-order approximation of the log-determinant function, our approach provides closed-form update rules, drastically reducing the per-iteration computational complexity. Furthermore, we theoretically prove that the algorithm monotonically decreases the volume and that the parameter sequence asymptotically converges to the set of stationary points. Numerical experiments demonstrate that the proposed method exhibits vastly superior computational efficiency and scalability compared to interior-point solvers.
Matrix ellipsoids provide a standard framework for representing bounded uncertainties in data-driven control. Since noise models for sequential observations are naturally represented as the Minkowski sum of multiple matrix ellipsoids, applying existing robust control methods, which typically assume a single ellipsoidal set, requires a tight outer approximation. While techniques based on linear matrix inequalities (LMI) are applicable, their computational cost grows quadratically with the data length, limiting their scalability. This paper investigates the optimal outer approximation problem under two criteria: the sum of squared semi-axes and the volume. We propose an LMI-free approach by introducing a parameterized family of bounding matrix ellipsoids. Specifically, we derive an exact analytical solution for the first criterion and develop an efficient majorization-minimization (MM) algorithm for the second. The proposed MM algorithm employs a first-order approximation of the log-determinant function to provide closed-form update rules, ensuring monotonic convergence to the set of stationary points. Numerical experiments demonstrate that our method offers significantly higher computational efficiency and scalability than standard interior-point solvers.
\end{abstract}

%%%%%%%%%%%%%%%%%%%%%%%%%%%%%%%%%%%%%%%%%%%%%%%%%%%%%%%%%%%%%%%%%%%%%%%%%%%%%%%%
\section{INTRODUCTION}

%In control theory, ellipsoids have been widely utilized to represent bounded uncertainties. Characterized solely by a center vector and a shape matrix, ellipsoids require significantly fewer parameters to describe than other geometries like polyhedra. Leveraging this property, numerous methods have been proposed to approximate and estimate the possible bounds of future state trajectories and system parameters using ellipsoids, under the assumption of unknown-but-bounded disturbances and measurement noise~\cite{ellip:FOGEL_Automatica_1982,ellip:kurzhanski2000,ellip:durieu_multi-input_2001}.
%During prediction and estimation with error bounds, algorithms must compute Minkowski sums and intersections of ellipsoids to account for accumulating disturbances and updated measurements. Consequently, techniques to approximate the results of these set operations with a single new ellipsoid have been extensively developed, forming a crucial theoretical foundation for ellipsoid-based prediction and estimation~\cite{ellip:durieu_multi-input_2001,ellip:kurzhanskiy_TAC_2007,ellip:yan_closed-form_2015,ellip:Halder_CDC_2018}.

In control theory, ellipsoids are widely used to represent bounded uncertainties. Compared to other shapes such as polyhedra, ellipsoids are much simpler to describe, making them highly practical for capturing the possible bounds of future states and system parameters under unknown bounded disturbances and measurement noise~\cite{ellip:FOGEL_Automatica_1982,ellip:kurzhanski2000,ellip:durieu_multi-input_2001}. In real-time applications, handling sequential observations naturally involves Minkowski sums of ellipsoids to account for accumulating disturbances. Because standard algorithms typically assume a single bounding ellipsoid, the exact results of these operations must be approximated by a single ellipsoid. Consequently, techniques for such approximations have served as a crucial foundation for ellipsoid-based prediction and estimation~\cite{ellip:durieu_multi-input_2001,ellip:kurzhanskiy_TAC_2007,ellip:yan_closed-form_2015,ellip:Halder_CDC_2018}.

Extending the standard vector-valued framework, matrix ellipsoids have been introduced to represent matrix-valued uncertainties. This representation is particularly useful in data-driven control, where controllers for unknown systems are synthesized directly from noisy time-series data. The key idea in this approach is that both the bounds on the noise sequences and the desired control specifications can be formulated as matrix ellipsoids. Consequently, the matrix S-lemma~\cite{DDCQMI:Waarde2022_TAC_origin,DDCQMI:Waarde2023_siam_qmi} allows the inclusion relations between these ellipsoids to be equivalently transformed into computationally tractable linear matrix inequalities (LMIs). This powerful property has enabled the systematic derivation of LMI conditions for a wide range of control objectives under various noise settings~\cite{DDCQMI:Waarde2022_TAC_origin,DDCQMI:Waarde2023_siam_qmi ,DDCQMI:BISOFFI2022_Petersen, DDCQMI:Steentjes2022_Cont_Sys_Let_Covariance, DDCQMI:Waarde2024_TAC_AR, DDCQMI:BISOFFI2024_CSL, DDCQMI:Lidong2026, DDCQMI:Kaminaga2025_ACC, DDCQMI:kaminaga2025datainformativity}.

%To address this limitation, studies in data-driven control have individually investigated instantaneous constraints~\cite{DDCQMI:BISOFFI2021,DDCQMI:BISOFFI2024_CSL} and bounds on the maximum noise amplitude~\cite{DDCQMI:Hu2025_RDPC}. Although these studies are primarily motivated by controller design, the mathematical challenge of outer-approximating a noise set with complex constraints using a single matrix ellipsoid is also treated as a crucial step.
%Indeed, \cite{DDCQMI:kaminaga2025datainformativity} has shown that such diverse noise constraints can be comprehensively represented as the Minkowski sum of multiple matrix ellipsoids. That study derives LMI-based sufficient conditions for outer-approximating this Minkowski sum with a single matrix ellipsoid, allowing existing ellipsoid-based control methods to be applied under these diverse noise constraints.
%However, the LMI characterizing this approximation contains a number of variables proportional to the square of the data length, and its dimension grows proportionally with the data length. Consequently, the optimization problem scales rapidly, posing a severe computational issue that makes the LMI difficult to solve within a practical timeframe.

However, as in the vector-valued case, a single matrix ellipsoid is often insufficient to capture specific prior knowledge such as independent bounds on multiple noise sources or instantaneous constraints at each time step. To address this, several works in data-driven control have individually formulated bounding methods for instantaneous noise limits~\cite{DDCQMI:BISOFFI2021,DDCQMI:BISOFFI2024_CSL} and maximum noise amplitudes~\cite{DDCQMI:Hu2025_RDPC}. Recently, \cite{DDCQMI:kaminaga2025datainformativity} has unified these properties by representing the noise set as the Minkowski sum of multiple matrix ellipsoids. They have derived LMI conditions to outer-approximate this Minkowski sum with a single ellipsoid, thereby keeping the subsequent control synthesis tractable. While this LMI-based approach provides a comprehensive theoretical solution, its computational cost can be a limitation in practice, for example, in real-time applications. Because the number of matrix-valued decision variables and their dimension grow quadratically and linearly with the data length, respectively, the optimization problem scales poorly, making it difficult to solve efficiently as the dataset grows.

%To address this issue, we propose a fast outer-approximation method for Minkowski sums of matrix ellipsoids. First, we characterize a parameterized family of approximating matrix ellipsoids using only positive scalar parameters, equal in number to the ellipsoids being summed. Constructed based on the LMI sufficient conditions in~\cite{DDCQMI:kaminaga2025datainformativity}, this parameterization achieves an approximation equivalent to directly solving the original massive inequality constraints. This fundamentally avoids the rapid explosion of variables associated with conventional LMI-based characterizations, significantly improving computational efficiency. Simultaneously, this result serves as a natural extension of the vector-valued ellipsoid approximation method proposed in~\cite{ellip:durieu_multi-input_2001} to matrix-valued representations.

To address this issue, we propose a scalable outer-approximation method for Minkowski sums of matrix ellipsoids. A key insight, drawn from the classical vector-valued case~\cite{ellip:durieu_multi-input_2001}, is that the massive LMI conditions in~\cite{DDCQMI:kaminaga2025datainformativity} can be equivalently reformulated using a parameterized family of bounding ellipsoids. Crucially, this parameterization relies solely on positive scalar variables equal in number to the ellipsoids being summed. This avoids the variable explosion inherent in conventional LMI characterizations, enabling highly efficient computation without loss of approximation quality.

%Next, we determine the optimal matrix ellipsoid from this parameterized family based on criteria such as the sum of squared semi-axes or the volume. For minimizing the sum of squared semi-axes, we apply the Lagrange multiplier method from~\cite{ellip:durieu_multi-input_2001} with appropriate normalization to derive an analytical solution for the optimal parameters. 
%Since it requires only matrix multiplications and trace computations, this analytical solution is a powerful and efficient tool for accurately obtaining the approximating matrix ellipsoid.

%Conversely, for volume minimization, the objective function is non-convex, unlike in the vector-valued case. Therefore, we develop an efficient parameter update rule based on the majorization-minimization (MM) algorithm~\cite{math:sum2017MMalgo}. Specifically, exploiting the concavity of the log-determinant function, we construct a surrogate function that upper-bounds the objective and sequentially update the parameters by minimizing it. We successfully derive this update equation in closed form, drastically reducing the computational cost. Furthermore, we theoretically prove that this algorithm monotonically decreases the volume of the approximating matrix ellipsoid, and that the generated parameter sequence asymptotically converges to the set of stationary points of the objective function.

Building upon this parameterization, we determine the optimal bounding matrix ellipsoid by minimizing either the sum of squared semi-axes or the volume. For the first criterion, we extend the Lagrange multiplier method from~\cite{ellip:durieu_multi-input_2001} with appropriate normalization to derive an exact analytical solution. Requiring only fundamental matrix operations, this closed-form solution provides a highly efficient tool for accurate approximation. In contrast, the second criterion, minimizing the volume, presents a fundamental mathematical challenge. Although volume minimization is typically a convex problem in the classical vector-valued case, extending it to the matrix-valued setting renders the objective function inherently non-convex. To overcome this difficulty, we develop an efficient parameter update rule based on the majorization-minimization (MM) algorithm~\cite{math:sum2017MMalgo}. By exploiting the concavity of the log-determinant function, we construct a surrogate upper bound and derive a closed-form update equation that drastically reduces computational cost. Furthermore, we theoretically prove that this algorithm monotonically decreases the volume and asymptotically converges to the set of stationary points.
Finally, we conduct numerical experiments to verify the effectiveness of the proposed method.

%Finally, we conduct numerical experiments to verify the effectiveness of the proposed method. Because the volume minimization for vector-valued approximation reduces to a convex optimization problem, a rigorous comparison with conventional convex solvers is possible. Through these experiments, we demonstrate that the proposed volume minimization algorithm exhibits overwhelming computational speed and scalability compared to general-purpose interior-point solvers.

This paper is organized as follows.
Section~\ref{sec:pre} reviews the preliminary results.
Section~\ref{sec:prob} formulates the problem of outer approximation of the Minkowski sum of matrix ellipsoids.
In Section~\ref{sec:parametrize}, we introduce our key parameterization technique to characterize the bounding matrix ellipsoids.
Section~\ref{sec:opt} presents the methods for determining the optimal ellipsoid.
Its effectiveness is numerically demonstrated in Section~\ref{sec:numerical}, and Section~\ref{sec:conclusion} concludes the paper.

\subsection*{Notation}
We denote the set of integers by $\mathbb{Z}$,
the set $\{a, a+1, \dots, b\}$ for $a, b \in \mathbb{Z}$ with $a < b$ by $[a, b]$,
the set of $n$-dimensional vectors by $\mathbb{R}^n$,
the set of $n$-dimensional vectors with all positive and non-negative elements by $\mathbb{R}^n_{++}$ and $\mathbb{R}^n_+$ respectively,
the set of $n \times n$ symmetric matrices by $\mathbb{S}^n$,
the set of $n \times n$ positive (semi)definite matrices by $\mathbb{S}^n_{++} (\mathbb{S}^n_+)$,
the $n \times n$ identity matrix by $I_n$,
the $n \times m$ zero matrices by $0_{n,m}$,
the block diagonal matrix with blocks $A_1, A_2, \dots, A_K$ by $\mathrm{diag}_{i \in [1, K]}(A_i)$,
the Kronecker product of matrices $A$ and $B$ by $A \otimes B$,
the Minkowski sum of sets $\mathcal{S}_1$ and $\mathcal{S}_2$ by $\mathcal{S}_1 \oplus \mathcal{S}_2$,
the gradient of a function $f(x)$ by $\nabla f(x)$ or $\nabla_x f(x)$,
the positive and negative (semi)definiteness of a symmetric matrix $M$ by $M \succ (\succeq) 0$ and $M \prec (\preceq) 0$ respectively,
and the positive semidefinite square root of $A \succeq 0$ by $A^\frac{1}{2}$.
% The subscript is omitted when the dimension is clear from the context.

\section{Preliminaries}\label{sec:pre}
\subsection{Matrix Ellipsoids}\label{subsec:matellip}
This subsection explains the definition and properties of matrix ellipsoids.
First, a matrix ellipsoid defined by $C\inX{R}{q\times r}$, $Q\inX{S}{q}_{++}$, and $R\inX{S}{r}_{++}$ is formulated as
\begin{equation}\label{e:EllipDfn}
    \el(C,Q,R)\coloneqq \{X\inX{R}{q\times r}\mid(X-C)^\top Q^{-1}(X-C)\preceq R\}. %\subseteq\mathbb{R}^{p\times r}
\end{equation}
Note that this definition has a degree of freedom regarding parameter scaling; $\el(C,Q,R)=\el(C,a^{-1}Q,aR)$ holds for any $a>0$. Furthermore, setting $r=1$ and $R=1$ recovers the standard vector-valued ellipsoid as a special case.

In data-driven control, the noise matrix is often assumed to be bounded by a known matrix ellipsoid. In this context, representations using quadratic matrix inequalities have been widely used, such as
\begin{equation}\label{e:QuadForm}
    \matvec{I_r}{X}^\top \Pi\matvec{I_r}{X}\succeq 0,
\end{equation}
with which a matrix ellipsoid can be described as $\el(C,Q,R)=\{X\mid \eqref{e:QuadForm}\}$, where $\Pi$ and $(C,Q,R)$ correspond to each other by
\begin{equation}\label{e:Pi_CPR}
    \Pi=\sqmat{R-C^\top Q^{-1}C}{C^\top Q^{-1}}{Q^{-1}C}{-Q^{-1}}.
\end{equation}
For notational simplicity, we hereinafter denote $\Spr{q}{r}= \mathbb{S}^q_{++}\times \mathbb{S}^r_{++}$.
A rigorous analysis of the matrix representation \eqref{e:QuadForm} is provided in \cite{DDCQMI:Waarde2023_siam_qmi}, which yields the following useful properties.

\begin{prp}\label{prp:MatEllip}
Let $(Q,R)\in\Spr{q}{r}$.
Then, the following statements hold.
\begin{enumerate}[label=(\alph*), ref=\theprp(\alph*)]
    \item \label{prp:bound} $\el(C,Q,R)$ is bounded and has a nonempty interior.
    \item \label{prp:explicit} $\el(C,Q,R)$ can be explicitly expressed as
    \begin{equation}\label{e:Explicit}
        \el(C,Q,R)=\{C+Q^{\frac{1}{2}}SR^{\frac{1}{2}}\mid \norm{S}_2\le 1\}.
    \end{equation}
\end{enumerate}
\end{prp}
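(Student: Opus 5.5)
Both statements follow from the change of variables $S = Q^{-\frac{1}{2}}(X-C)R^{-\frac{1}{2}}$, so we prove (b) first and derive (a) from it. Since $Q\succ 0$ and $R\succ 0$, the square roots $Q^{\frac{1}{2}}$ and $R^{\frac{1}{2}}$ are symmetric and invertible. The map $\phi(S)=C+Q^{\frac{1}{2}}SR^{\frac{1}{2}}$ is therefore an affine bijection of $\mathbb{R}^{q\times r}$ onto itself, with inverse $\phi^{-1}(X)=Q^{-\frac{1}{2}}(X-C)R^{-\frac{1}{2}}$.

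For (b), we translate the defining inequality in \eqref{e:EllipDfn} into the $S$ variable. Substituting $X-C=Q^{\frac{1}{2}}SR^{\frac{1}{2}}$ gives
\begin{equation}
(X-C)^\top Q^{-1}(X-C)=R^{\frac{1}{2}}S^\top S R^{\frac{1}{2}} .
\end{equation}
A congruence with the invertible matrix $R^{-\frac{1}{2}}$ preserves the Loewner order. Hence
\begin{equation}
R^{\frac{1}{2}}S^\top S R^{\frac{1}{2}}\preceq R \iff S^\top S\preceq I_r .
\end{equation}
The condition $S^\top S\preceq I_r$ says $\lambda_{\max}(S^\top S)\le 1$, which is exactly $\norm{S}_2\le 1$. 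Thus $X\in\el(C,Q,R)$ if and only if $\phi^{-1}(X)$ lies in the spectral-norm unit ball. This is \eqref{e:Explicit}, with the two inclusions handled by $\phi$ and $\phi^{-1}$ respectively.

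For (a), note that the unit ball $\{S\mid\norm{S}_2\le 1\}$ is bounded, since all norms on $\mathbb{R}^{q\times r}$ are equivalent. Its interior is nonempty because it contains the open ball $\{S\mid \norm{S}_2<1\}$, which includes $S=0$. An invertible affine map is a homeomorphism on a finite-dimensional space and sends bounded sets to bounded sets. Concretely,
\begin{equation}
\norm{X-C}_2\le\norm{Q^{\frac{1}{2}}}_2\norm{R^{\frac{1}{2}}}_2 .
\end{equation}
Also, $\phi$ maps the open unit ball onto an open set containing $C$. Both properties therefore transfer to $\el(C,Q,R)$.

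The only nonroutine point is the equivalence between $R^{\frac{1}{2}}S^\top SR^{\frac{1}{2}}\preceq R$ and $\norm{S}_2\le1$, and we settle it with the congruence argument above. Everything else is direct substitution. Alternatively, the result could be cited from the general quadratic matrix inequality analysis in \cite{DDCQMI:Waarde2023_siam_qmi}, but the self-contained argument is short enough to give in full.
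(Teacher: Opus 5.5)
Your proof is correct, but it takes a different route from the paper. The paper gives no computation and reads both parts off \cite[Theorem 3.2]{DDCQMI:Waarde2023_siam_qmi} and \cite[Theorem 3.3]{DDCQMI:Waarde2023_siam_qmi}, that is, off the general theory of solution sets of the quadratic matrix inequality \eqref{e:QuadForm}.

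That route relies on the translation through \eqref{e:Pi_CPR}, which the paper does not spell out. Expanding gives $\matvec{I_r}{X}^\top\Pi\matvec{I_r}{X}=R-(X-C)^\top Q^{-1}(X-C)$. Writing $\Pi_{ij}$ for the blocks of $\Pi$, one has $\Pi_{22}=-Q^{-1}\prec 0$ and the Schur complement $\Pi_{11}-\Pi_{12}\Pi_{22}^{-1}\Pi_{21}=R\succ 0$, which puts $\Pi$ within the scope of the cited results. The data in \eqref{e:Explicit} are recovered as $C=-\Pi_{22}^{-1}\Pi_{21}$ and $Q^{\frac{1}{2}}=(-\Pi_{22})^{-\frac{1}{2}}$.

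The citation buys generality. Those results describe the solution set of \eqref{e:QuadForm} for general partitioned $\Pi$, including degenerate cases such as a singular $\Pi_{22}$. This is also the form in which noise models are usually stated in the data-driven control literature.

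Your change of variables $S=Q^{-\frac{1}{2}}(X-C)R^{-\frac{1}{2}}$ buys a short, self-contained argument. It uses only $Q\succ0$, $R\succ0$, invariance of the Loewner order under congruence, and $\norm{S}_2^2=\lambda_{\max}(S^\top S)$. Part (a) then follows from (b) through the affine homeomorphism $\phi$.

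Since the statement already assumes $(Q,R)\in\Spr{q}{r}$, none of the extra generality is needed here. Your argument could replace the citation without loss.
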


Proposition~\ref{prp:MatEllip} follows directly from \cite[Theorem 3.2]{DDCQMI:Waarde2023_siam_qmi} and \cite[Theorem 3.3]{DDCQMI:Waarde2023_siam_qmi}.
Regarding Proposition~\ref{prp:explicit}, a vector-valued ellipsoid with $R=1$ can be written as $\el(c,Q,1)=\{c+Q^{\frac{1}{2}}s\mid \norm{s}_2\le 1\}$.
In this case, $\sqrt{\det Q}$ is proportional to the volume of the ellipsoid, and $\tr Q$ represents the sum of squared semi-axes. 
To extend these geometric concepts to matrix ellipsoids, we consider the vectorization of matrix ellipsoids as in \cite[Section 2]{DDCQMI:BISOFFI2021}.
Let $\mathrm{vec}:\mathbb{R}^{q\times r}\rightarrow \mathbb{R}^{qr}$ be the vectorization operator. 
By the properties of the Kronecker product, we have
\begin{align}
    \el_{\rm v}(C,Q,R)&\coloneqq \{\mathrm{vec}(X)\mid X\in\el(C,Q,R)\}\\
    &=\{\mathrm{vec}(C+Q^{\frac{1}{2}}SR^{\frac{1}{2}})\mid \norm{S}_2\le 1\}\\
    &= \{c_{\rm v}+(R\otimes Q)^{\frac{1}{2}}\mathrm{vec}(S) \mid \norm{S}_2\le 1\},
\end{align}
where $c_{\rm v}=\mathrm{vec}(C)$. 
This indicates that the volume and the sum of squared semi-axes of a matrix ellipsoid can also be evaluated as functions of $R\otimes Q$.

In this paper, we focus on matrix ellipsoids centered at the origin, defining them as $\el^0(Q,R)\coloneqq \el(0,Q,R)$. From Proposition~\ref{prp:explicit}, any matrix ellipsoid can be written as $\el(C,Q,R)=\{C\}\oplus\el^0(Q,R)$.

\subsection{Data-Driven Control with Matrix Ellipsoids}\label{subsec:cnt}
To clarify our motivation, we describe the relationship between matrix ellipsoids and data-driven control~\cite{DDCQMI:Kaminaga2025_ACC,DDCQMI:kaminaga2025datainformativity}.
%This subsection overviews the problem setting and results from \cite{DDCQMI:Kaminaga2025_ACC,DDCQMI:kaminaga2025datainformativity}.
Consider a discrete-time linear time-invariant system $x_+=A^*x+B^*u$, where $x_+\in\mathbb{R}^n,x\in\mathbb{R}^n,$ and $u\in\mathbb{R}$ are the subsequent state, the current state, and the input, respectively.
The true system matrices $(A^*,B^*)$ are unknown, but noisy input-state data is available. 
In an ideal, noise-free environment, an offline data batch data $(X_+^*,X^*,U^*)$ of length $T$ satisfies the system dynamics
\begin{equation}\label{cnt:cleandata}
    X_{+}^*=A^*X^*+B^*U^*,
\end{equation}
where $X_{+}^*\in\mathbb{R}^{n\times T}$, $X^*\in\mathbb{R}^{n\times T}$, and $U^*\in\mathbb{R}^{m\times T}$.
The available measured data $(X_+,X,U)$ is assumed to be corrupted by additive perturbations $(\Delta_{X_+},\Delta_X,\Delta_U)$ as
\begin{equation}\label{cnt:measured_data}
    X_+=X_+^*+\Delta_{X_+},\quad X=X^*+\Delta_X,\quad U=U^*+\Delta_U,
\end{equation}
whose dimensions match those of $(X_+^*,X^*,U^*)$.
By defining the data matrix $\mathbf{X}\coloneqq[X_+^\top~ -X^\top ~ -U^\top]^\top$ and the perturbation matrix $\Delta\coloneqq[\Delta_{X_+}^\top ~ -\Delta_X^\top ~ -\Delta_U^\top]^\top$, equations \eqref{cnt:cleandata} and \eqref{cnt:measured_data} yield $[I_n~A^*~B^*](\mathbf{X}-\Delta)=0$.
Furthermore, the unknown perturbation $\Delta$ is assumed to satisfy
\begin{equation}\label{cnt:noise_matellip}
    \Delta=E\hDel,\quad \hDel^\top\in\el(C,Q,R),
%\Delta^\top \in E\el(C,Q,R)
\end{equation}
where $E\inX{R}{(2n+m)\times \hat{n}}$ and $(C,Q,R)\inX{R}{T\times \hat{n}}\times \mathbb{S}^T_{++}\times \mathbb{S}^{\hat{n}}_{++}$ are known matrices.
The goal is to design a controller $u=Kx$ based on the data $(X_+,X,U)$ and prior knowledge of perturbations in~\eqref{cnt:noise_matellip}, ensuring desired performance for the closed-loop system $x_+=(A^*+B^*K)x$.
This problem setting has been widely studied in the literature. Specifically, setting $E=[I_n~ 0_{n,n+m}]^\top$ corresponds to studies dealing with process disturbances \cite{DDCQMI:Waarde2022_TAC_origin,DDCQMI:BISOFFI2022_Petersen,DDCQMI:Steentjes2022_Cont_Sys_Let_Covariance}, while $E=I_{2n+m}$ covers studies considering measurement noise \cite{DDCQMI:BISOFFI2024_CSL,DDCQMI:Lidong2026}.

Let $\Sigma$ be the set of all systems consistent with the measured data:
\begin{equation}
    \Sigma\!\coloneqq\!\{(A,B)\,|\, \text{$\exists\Delta$ s.t. $[I_n~A~B](\mathbf{X}-\Delta)\!=0$ and \eqref{cnt:noise_matellip} hold}\}.
\end{equation}
By definition, $(A^*,B^*)\in\Sigma$ trivially holds. 
Since the designer cannot distinguish $(A^*,B^*)$ from other systems in $\Sigma$, performance guarantees for the true system are achieved by ensuring the performance for all systems in $\Sigma$.
%Practically, a controller $K$ that guarantees this performance can be obtained by solving LMIs derived via the matrix S-lemma.
%As mentioned in the introduction, LMI conditions for various control objectives have already been developed.
LMI conditions on the controller $K$ for various control objectives have been developed via the matrix S-lemma~\cite{DDCQMI:Waarde2022_TAC_origin,DDCQMI:Waarde2023_siam_qmi}.

%To design a controller using this approach, one must determine the matrices $E$, $C$, $Q$, and $R$ satisfying \eqref{cnt:noise_matellip}. However, this format of prior knowledge does not always align with the actual information available in practical applications.
However, a single matrix ellipsoid is often insufficient to capture specific prior knowledge in practice.
%To address this issue, \cite{DDCQMI:kaminaga2025datainformativity} sought to improve the representational capability of \eqref{cnt:noise_matellip}, generalize the noise model~\eqref{cnt:noise_matellip} to the following form:
To address this issue, \cite{DDCQMI:kaminaga2025datainformativity} generalizes the noise model~\eqref{cnt:noise_matellip} to the form
\begin{equation}\label{cnt:noise_minkowski}
\textstyle{
    \Delta^\top\in \bigoplus_{k=1}^KF_k \el(C_k,Q_k,R_k)G_k.
    }
\end{equation}
This class includes, e.g., (i) independent information on multiple noise sources, (ii) bounds on maximum noise amplitude, and (iii) instantaneous noise constraints.
To handle this type of noises, the work \cite{DDCQMI:kaminaga2025datainformativity} has derived LMI-based sufficient conditions for $(C,Q,R)$ to satisfy the outer approximation
\begin{equation}\label{cnt:noise_kinji}
\textstyle{
    \bigoplus_{k=1}^KF_k \el(C_k,Q_k,R_k)G_k\subseteq\el(C,Q,R),
    }
\end{equation}
and proposed a method to solve these LMIs simultaneously with those for controller design.

However, the sufficient LMI conditions for \eqref{cnt:noise_kinji} involve a massive number of variables proportional to $T^2$. 
Moreover, in cases (ii) and (iii), the number of summed ellipsoids $K$ also grows proportionally to $T$, severely compromising its scalability. 
To resolve this issue, this paper derives an algorithm to compute $(C,Q,R)$ satisfying \eqref{cnt:noise_kinji} with low computational complexity.

\section{Problem Formulation}\label{sec:prob}

We consider the problem of outer approximation for the Minkowski sum of matrix ellipsoids with a single matrix ellipsoid.
Consider $K$ matrix ellipsoids $\el_k\coloneqq \el(C_k,Q_k,R_k)$, where $(C_k,Q_k,R_k)\in \mathbb{R}^{q_k\times r_k}\times\mathbb{S}^{q_k}_{++}\times \mathbb{S}^{r_k}_{++}$ for $k\in[1,K]$.
Let the Minkowski sum of these ellipsoids under linear transformations be defined as
\begin{align}
\textstyle{
    \mathcal{T}\coloneqq \bigoplus_{k=1}^K F_k\el_kG_k,
    }
\end{align}
where $F_k\inX{R}{q\times q_k}$ and $G_k\inX{R}{r_k\times r}$ for $k\in[1,K]$ are known constant matrices.
We aim to outer-approximate this set with a single matrix ellipsoid, namely, $(C,Q,R)\in \mathbb{R}^{q\times r}\times\mathbb{S}^{q}_{++}\times \mathbb{S}^{r}_{++}$ satisfying $\el(C,Q,R)\supseteq\mathcal{T}$.

In this study, we fix the center $C$ of the approximating matrix ellipsoid to the geometric center of $\mathcal{T}$.
Since $\el(C_k,Q_k,R_k)=\{C_k\}\oplus\el^0(Q_k,R_k)$, the set $\mathcal{T}$ can be written as
\begin{equation}
    \mathcal{T}=\left\{\sum_{k=1}^K F_kC_kG_k\right\}\oplus \mathcal{T}^0,\quad\mathcal{T}^0\coloneqq\left(\bigoplus_{k=1}^K F_k\el_k^0G_k\right),
\end{equation}
where $\el^0_k\coloneqq\el^0(Q_k,R_k)$.
Thus, we can fix the center as $C=\sum_{k=1}^K F_kC_kG_k$. With this choice of $C$, the outer approximation condition $\el(C,Q,R)\supseteq\mathcal{T}$ becomes equivalent to the inclusion condition centered at the origin:
\begin{equation}\label{apx:prob_0}
    \el^0(Q,R)\supseteq \mathcal{T}^0.
\end{equation}

Our objective is to find the optimal matrix ellipsoid that minimizes a specific size criterion.
As discussed in Section~\ref{subsec:matellip}, the size of the matrix ellipsoid $\el(C,Q,R)$ can be expressed as a function of $R\otimes Q$. Let $F(R\otimes Q)$ denote the function determining the size of the matrix ellipsoid. We consider the problem of minimizing this function:
\begin{equation}\label{prb:prb}
    \min_{Q,R} F(R\otimes Q)\quad \text{subject to} \quad \eqref{apx:prob_0}.
\end{equation}
In this paper, we specifically focus on cases where $F(X)= \tr(X)$ or $F(X)= \logdet(X)$.
Similar to vector-valued ellipsoids, setting $F(X)= \tr(X)$ corresponds to minimizing the sum of squared semi-axes. Alternatively, setting $F(X)= \logdet(X)$ implies volume minimization.
%Our goal is to obtain the optimal solution to \eqref{prb:prb} for each criterion.

For the subsequent analysis, we assume the linear transformation matrices satisfy the following condition.

\begin{ass}\label{ass:FG}
For all $k$, $F_k$ and $G_k$ are non-zero. Further, $F\coloneqq [F_1~~F_2\cdots F_K]$ is of full row rank, and $G\coloneqq [G_1^\top~~G_2^\top\cdots G_K^\top]^\top$ is of full column rank.
\end{ass}

This assumption is not restrictive.
If either $F_k$ or $G_k$ is zero, we have $F_k\el_kG_k=\{0\}$, which can simply be excluded from~\eqref{apx:prob_0}.
Moreover, if $q'=\mathrm{rank}\,F<q$, there exists a matrix $\bar{F}\inX{R}{q\times q'}$ such that $F_k=\bar{F}\hat{F}_k$ for all $k$. In this case, letting $\hat{F}\coloneqq [\hat{F}_1~~\hat{F}_2\cdots\hat{F}_{K}]$ yields $F=\bar{F}\hat{F}$. Since $\rank F=\rank(\bar{F}\hat{F})\le\rank \hat{F}$, the matrix $\hat{F}$ is of full row rank. Therefore, instead of \eqref{apx:prob_0}, we can consider the problem of approximating $\bigoplus_{k=1}^K \hat{F}_k\el_k^0G_k$. If $G$ is not of full column rank, we can similarly reduce the problem to an approximation in a lower dimension.

%In the following, we parameterize the family of matrix ellipsoids satisfying \eqref{apx:prob_0} and then solve \eqref{prb:prb}.

\section{Parameterization of Bounding Matrix Ellipsoids}\label{sec:parametrize}

\subsection{Bounding Conditions via LMIs}

We derive sufficient conditions for the outer approximation \eqref{apx:prob_0}. In this subsection, extending the method in \cite{boyd1994linear}, we obtain LMI-based sufficient conditions for the approximation.

\begin{lem}\label{lem:Spro}
    Let $(Q,R)\in \Spr{q}{r}$.
    Equation~\eqref{apx:prob_0} holds 
    if there exists $\alpha\inX{R}{K}_+$ such that
    \begin{equation}\label{apx:Sprocedure}
        R-X_{\rm sum}^\top Q^{-1}X_{\rm sum}\succeq\sum_{k=1}^K\alpha_kG_k^\top(R_k-X_k^\top Q^{-1}_kX_k)G_k,
    \end{equation}
    holds for any $X_k\in\el_k^0$
    where $X_{\rm sum}\coloneqq \sum_{k=1}^K F_kX_kG_k$.
\end{lem}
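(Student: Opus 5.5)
This is an S-procedure argument, and the plan is to verify the inclusion element by element. Let $Y\in\mathcal{T}^0$ be arbitrary. By the definition of the Minkowski sum, we can write $Y=\sum_{k=1}^K F_kX_kG_k=X_{\rm sum}$ for some choice of $X_k\in\el_k^0$, $k\in[1,K]$. By the definition \eqref{e:EllipDfn} with $C=0$, it then suffices to show that
\begin{equation}
    R-X_{\rm sum}^\top Q^{-1}X_{\rm sum}\succeq 0,
\end{equation}
which is exactly the statement $X_{\rm sum}\in\el^0(Q,R)$.

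For each $k$, membership $X_k\in\el_k^0=\el^0(Q_k,R_k)$ means, again by \eqref{e:EllipDfn}, that $R_k-X_k^\top Q_k^{-1}X_k\succeq 0$. Congruence by $G_k$ preserves semidefiniteness, so
\begin{equation}
    G_k^\top(R_k-X_k^\top Q_k^{-1}X_k)G_k\succeq 0 .
\end{equation}
Since $\alpha_k\ge 0$, the nonnegative combination
\begin{equation}
    \sum_{k=1}^K\alpha_kG_k^\top(R_k-X_k^\top Q_k^{-1}X_k)G_k
\end{equation}
is positive semidefinite as well.

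Next we apply hypothesis \eqref{apx:Sprocedure} to this particular tuple $(X_1,\dots,X_K)$. It gives $R-X_{\rm sum}^\top Q^{-1}X_{\rm sum}\succeq$ a positive semidefinite matrix, and by transitivity of $\succeq$ we obtain $R-X_{\rm sum}^\top Q^{-1}X_{\rm sum}\succeq 0$. Hence $Y\in\el^0(Q,R)$. Because $Y\in\mathcal{T}^0$ was arbitrary, \eqref{apx:prob_0} follows.

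No step here is a real obstacle. The one point that needs care is the representation of an arbitrary element of $\mathcal{T}^0$ as $X_{\rm sum}$ with every $X_k\in\el_k^0$, since that is what lets us invoke \eqref{apx:Sprocedure}, which is assumed for all such tuples. The assumption $(Q,R)\in\Spr{q}{r}$ is used only to make $Q^{-1}$ well defined, and Assumption~\ref{ass:FG} is not needed for this lemma.
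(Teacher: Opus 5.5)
Your proposal is correct and follows the paper's proof essentially verbatim: both reduce the inclusion to showing $X_{\rm sum}\in\el^0(Q,R)$ for every tuple $X_k\in\el_k^0$, observe that the right-hand side of \eqref{apx:Sprocedure} is positive semidefinite because $\alpha_k\ge 0$ and congruence by $G_k$ preserves semidefiniteness, and conclude $R-X_{\rm sum}^\top Q^{-1}X_{\rm sum}\succeq 0$. Your explicit remarks on representing elements of $\mathcal{T}^0$ and on not needing Assumption~\ref{ass:FG} are accurate additions.
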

\begin{proof}
    For \eqref{apx:prob_0} to hold, it is sufficient that $X_{\rm sum}\in\el^0(Q,R)$ for any $X_k\in\el^0_k$, $k\in[1,K]$. When $X_k\in\el^0_k$ holds for all $k$, the right-hand side of \eqref{apx:Sprocedure} is positive semidefinite. Consequently, we have $R-X_{\rm sum}^\top Q^{-1}X_{\rm sum}\succeq 0$, which implies $X_{\rm sum}\in\el^0(Q,R)$.
\end{proof}

By introducing an augmented matrix $Z=\diag_{k\in[1,K]}(X_k) G\inX{R}{\sum_{k=1}^Kq_k\times r}$ and rewriting \eqref{apx:Sprocedure}, we obtain the following LMI representation.

\begin{prp}\label{prp:LMI}
    Let $(P,R)\in\Spr{q}{r}$.
    If there exists $\alpha\inX{R}{K}_+$ such that
    \begin{subequations}\label{apx:LMI}
    \begin{align}
        &\mathrm{diag}_{k\in[1,K]}(\alpha_k Q_k^{-1})-F^\top P F\succeq 0, \label{apx:LMI_P}\noeqref{apx:LMI_P}\\
        &\textstyle{
        R-\sum_{k=1}^K \alpha_kG_k^\top R_kG_k\succeq 0, \label{apx:LMI_R}\noeqref{apx:LMI_R}
        }
    \end{align}
    \end{subequations}
    then $Q=P^{-1}$ and $R$ satisfy \eqref{apx:prob_0}.
\end{prp}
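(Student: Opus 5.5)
The plan is to reduce to Lemma~\ref{lem:Spro}. We show that if \eqref{apx:LMI} holds with $Q=P^{-1}$, then \eqref{apx:Sprocedure} holds for every choice of $X_k\in\el_k^0$; indeed, we expect it to hold for arbitrary $X_k\in\mathbb{R}^{q_k\times r_k}$. Since $P\succ 0$, $Q=P^{-1}\in\mathbb{S}^q_{++}$, so $(Q,R)\in\Spr{q}{r}$ and the lemma applies. The proof will split \eqref{apx:Sprocedure} into a quadratic part and a constant part, and match them to \eqref{apx:LMI_P} and \eqref{apx:LMI_R} respectively.

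First rewrite the quadratic terms using the augmented matrix $Z=\diag_{k\in[1,K]}(X_k)G$. Its $k$-th block row is $X_kG_k$, so $FZ=\sum_k F_kX_kG_k=X_{\rm sum}$. This gives
\begin{equation}
X_{\rm sum}^\top P X_{\rm sum}=Z^\top F^\top P F Z,
\qquad
\sum_{k=1}^K\alpha_k G_k^\top X_k^\top Q_k^{-1}X_kG_k = Z^\top \diag_{k\in[1,K]}(\alpha_kQ_k^{-1}) Z .
\end{equation}
Rearranging, \eqref{apx:Sprocedure} with $Q^{-1}=P$ becomes
\begin{equation}
\Bigl(R-\sum_{k=1}^K\alpha_kG_k^\top R_kG_k\Bigr)+Z^\top\Bigl(\diag_{k\in[1,K]}(\alpha_kQ_k^{-1})-F^\top PF\Bigr)Z\succeq 0 .
\end{equation}

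The first bracket is positive semidefinite by \eqref{apx:LMI_R}. The second term is a congruence $Z^\top MZ$ of the matrix $M$ in \eqref{apx:LMI_P}, which is positive semidefinite; hence $Z^\top MZ\succeq 0$ for every $Z$. The sum of the two is therefore positive semidefinite, so \eqref{apx:Sprocedure} holds for all $X_k$, and Lemma~\ref{lem:Spro} yields \eqref{apx:prob_0}.

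There is no real obstacle. The only step needing care is the block bookkeeping that identifies $FZ$ with $X_{\rm sum}$ and the weighted sum with the congruence by $\diag(\alpha_kQ_k^{-1})$; checking the dimensions $q_k\times r_k$ and $r_k\times r$ block by block settles it.
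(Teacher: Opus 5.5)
Your proof is correct and matches the paper's argument: introduce $Z=\diag_{k\in[1,K]}(X_k)G$ and identify $FZ=X_{\rm sum}$. Then rewrite \eqref{apx:Sprocedure} as the \eqref{apx:LMI_R} matrix plus the congruence $Z^\top(\cdot)Z$ of the \eqref{apx:LMI_P} matrix, and conclude with Lemma~\ref{lem:Spro}. You also check explicitly that $Q=P^{-1}\succ 0$, so that $(Q,R)\in\Spr{q}{r}$ as the lemma requires, which the paper leaves implicit.
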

\begin{proof}
    Regarding the augmented matrix $Z$, we have $X_{\rm sum}=FZ$ and $X_kG_k=U_kZ$, where $U_k\coloneqq [0_{q_k,\sum_{l=1}^{k-1}q_l}~~I_{q_k}~~0_{q_k,\sum_{l=k+1}^K q_l}]$. Substituting these relations into \eqref{apx:Sprocedure} yields{\small
    \begin{equation}
            R-Z^\top F^\top Q^{-1}FZ- \sum_{k=1}^K\alpha_k(G_k^\top R_kG_k-Z^\top U_k^\top Q_k^{-1}U_kZ)\succeq 0.
    \end{equation}}Substituting $Q^{-1}=P$ and rearranging the terms, this inequality is further rewritten as{\small
    \begin{equation}\label{apx:prf_Z}
        \begin{split}
            &\textstyle{
            R-\sum_{k=1}^K \alpha_kG_k^\top R_kG_k
            }\\
            &+Z^\top\left(\mathrm{diag}_{k\in[1,K]}(\alpha_k Q_k^{-1})-F^\top P F\right)Z\succeq 0.
        \end{split}
    \end{equation}}When \eqref{apx:LMI_P} and \eqref{apx:LMI_R} hold, \eqref{apx:prf_Z} is clearly satisfied. Therefore, \eqref{apx:Sprocedure} holds, which implies \eqref{apx:prob_0} by Lemma~\ref{lem:Spro}.
\end{proof}

Proposition~\ref{prp:LMI} characterizes the bounding matrix ellipsoid $\el^0(Q,R)$ that achieves the outer approximation as the solution to LMIs. By defining the solution set of these LMIs as
\begin{equation}\label{apx:dfn_F}
    \mathcal{F}\coloneqq \left\{(P,R)\in\Spr{q}{r}\;\middle|\; \exists \alpha\inX{R}{K}_+ \st \eqref{apx:LMI}\right\},
\end{equation}
the family of approximating matrix ellipsoids satisfying \eqref{apx:prob_0} can be written as $\{\el^0(P^{-1},R)\mid(P,R)\in \mathcal{F}\}$.

By applying the Schur complement to \eqref{apx:LMI_P} and rewriting it as a condition on $Q= P^{-1}$, we can obtain an alternative representation of $\mathcal{F}$.

\begin{cor}\label{cor:QR}
Let Assumption~\ref{ass:FG} hold.
If $(P,R)\in\mathcal{F}$, $Q=P^{-1}$ and $R$ satisfy
\begin{align}
\textstyle{
    Q-\sum_{k=1}^K\frac{1}{\alpha_k}F_kQ_kF_k^\top\succeq 0,\label{apx:MI_Q}
    }
\end{align}
and \eqref{apx:LMI_R} with some $\alpha\inX{R}{K}_{++}$.
Conversely, if $Q\inX{S}{q}$ and $R\inX{S}{r}$ satisfy \eqref{apx:MI_Q} and \eqref{apx:LMI_R} with some $\alpha\inX{R}{K}_{++}$, then $Q\succ 0$ and $(Q^{-1},R)\in\mathcal{F}$.
\end{cor}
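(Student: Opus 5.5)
The plan is to prove both directions with the standard Schur complement identity for a $2\times 2$ block matrix. The key object is
\[
M\coloneqq\sqmat{\mathrm{diag}_{k\in[1,K]}(\alpha_k Q_k^{-1})}{F^\top}{F}{Q}.
\]
When the upper-left block is positive definite, i.e.\ $\alpha\in\mathbb{R}^K_{++}$, we have $M\succeq 0$ if and only if its Schur complement $Q-F\,\mathrm{diag}_k(\alpha_k^{-1}Q_k)F^\top=Q-\sum_k\alpha_k^{-1}F_kQ_kF_k^\top\succeq 0$, which is \eqref{apx:MI_Q}. When $Q\succ 0$, we have $M\succeq 0$ if and only if the other Schur complement $\mathrm{diag}_k(\alpha_kQ_k^{-1})-F^\top Q^{-1}F\succeq0$, which is \eqref{apx:LMI_P} with $P=Q^{-1}$. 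The only work is to arrange that the needed block is invertible in each direction.

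\emph{Forward direction.} Take $(P,R)\in\mathcal{F}$ with some $\alpha\in\mathbb{R}^K_+$. First show that $\alpha\in\mathbb{R}^K_{++}$. Suppose $\alpha_k=0$ for some $k$. Restrict \eqref{apx:LMI_P} to the $k$-th diagonal block to get $-F_k^\top PF_k\succeq0$. Since $P\succ0$, this forces $F_k=0$, which contradicts Assumption~\ref{ass:FG}. So every $\alpha_k>0$ and the upper-left block of $M$ is positive definite. Because $Q=P^{-1}\succ0$, the second Schur characterization applies, and \eqref{apx:LMI_P} gives $M\succeq0$. The first Schur characterization then gives \eqref{apx:MI_Q}. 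Condition \eqref{apx:LMI_R} is inherited unchanged from the definition of $\mathcal{F}$, with the same $\alpha$.

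\emph{Converse direction.} Here $Q\in\mathbb{S}^q$ is only assumed symmetric, so the first task is to show $Q\succ0$, which is the main obstacle. From \eqref{apx:MI_Q}, $Q\succeq \sum_k\alpha_k^{-1}F_kQ_kF_k^\top = F D F^\top$ with $D=\mathrm{diag}_k(\alpha_k^{-1}Q_k)\succ0$. Because $F$ has full row rank (Assumption~\ref{ass:FG}), $FDF^\top\succ0$: indeed $v^\top FDF^\top v=0$ implies $F^\top v=0$, hence $v=0$. Therefore $Q\succ0$.

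It remains to show $(Q^{-1},R)\in\mathcal{F}$. Since $\alpha\in\mathbb{R}^K_{++}$, the first Schur characterization turns \eqref{apx:MI_Q} into $M\succeq0$. Since $Q\succ0$, the second characterization turns $M\succeq 0$ into \eqref{apx:LMI_P} with $P=Q^{-1}$. For $R$, condition \eqref{apx:LMI_R} gives $R\succeq\sum_k\alpha_kG_k^\top R_kG_k=G^\top\mathrm{diag}_k(\alpha_kR_k)G$. This is positive definite because $G$ has full column rank, so $R\in\mathbb{S}^r_{++}$. Hence $(Q^{-1},R)\in\Spr{q}{r}$, it satisfies \eqref{apx:LMI} with this $\alpha\in\mathbb{R}^K_{++}\subseteq\mathbb{R}^K_+$, and so it belongs to $\mathcal{F}$.
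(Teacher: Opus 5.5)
Your proposal is correct and follows the paper's proof essentially step for step. Both arguments get $\alpha\in\mathbb{R}^K_{++}$ from the diagonal blocks of \eqref{apx:LMI_P} and pass between \eqref{apx:LMI_P} and \eqref{apx:MI_Q} via the two Schur complements of your block matrix $M$; in the converse, both obtain $Q\succ 0$ and $R\succ 0$ from the full row rank of $F$ and the full column rank of $G$.
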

\begin{proof}
    For the first statement, suppose $(P,R)\in\mathcal{F}$. Then, there exists $\alpha\inX{R}{K}_+$ satisfying \eqref{apx:LMI_P} and \eqref{apx:LMI_R}. 
    We show that this $\alpha$ belongs to $\mathbb{R}^K_{++}$ and satisfies \eqref{apx:MI_Q}.
    Regarding the block diagonal elements $\alpha_kQ_k^{-1}-F_k^\top PF_k\succeq 0$ in \eqref{apx:LMI_P}, Assumption~\ref{ass:FG} ensures that $F_k$ is non-zero. Since $P\succ 0$, the term $-F_k^\top P F_k$ has a negative eigenvalue. Thus, we must have $\alpha_k>0$ for all $k\in[1,K]$.
    Furthermore, since $P\succ 0$, \eqref{apx:LMI_P} is equivalent to
    \begin{equation}\label{PrfCorQR2}
        \sqmat{\diag_{k\in[1,K]}(\alpha_kQ_k^{-1})}{F^\top}{F}{Q}\succeq 0
    \end{equation}
    by the Schur complement. Applying the Schur complement to the top-left block yields \eqref{apx:MI_Q}. Note that $\diag_{k\in[1,K]}(\alpha_kQ_k^{-1})$ is invertible because $\alpha_k>0$ and $Q_k\succ 0$.
    
    Next, we prove the second statement. Suppose $(Q,R)$ satisfies \eqref{apx:MI_Q} and \eqref{apx:LMI_R} with some $\alpha\inX{R}{K}_{++}$. We show that $Q\succ0$, $R\succ 0$, and $P=Q^{-1}$ satisfies \eqref{apx:LMI_P}.
    First, we show $Q\succ 0$. Inequality \eqref{apx:MI_Q} can be rewritten as
    \begin{equation}\label{PrfCorQR1}
    \textstyle{
        Q-F\diag_{k\in[1,K]}\left(\frac{1}{\alpha_k}Q_k\right)F^\top\succeq 0.
        }
    \end{equation}
    Since $\alpha_k>0$, $Q_k\succ 0$, and $F$ is of full row rank by Assumption~\ref{ass:FG}, we obtain $Q\succ 0$. Similarly, $R\succ 0$ follows from \eqref{apx:LMI_R} and the full column rank of $G$.
    Next, applying the Schur complement to \eqref{PrfCorQR1} yields \eqref{PrfCorQR2}. Applying the Schur complement again to the bottom-right block gives \eqref{apx:LMI_P}.
\end{proof}

\subsection{Parameterization of Bounding Ellipsoids}
Based on Corollary~\ref{cor:QR}, we parameterize the bounding matrix ellipsoid using $\alpha\inX{R}{K}_{++}$ as
\begin{align}\label{apx:parametrize}
    \hspace{-10pt}Q(\alpha)=\sum_{k=1}^K\frac{1}{\alpha_k}F_kQ_kF_k^\top,\ 
    R(\alpha)=\sum_{k=1}^K \alpha_kG_k^\top R_kG_k.
\end{align}
This parameterization constitutes the boundary of the bounding matrix ellipsoids represented by the LMIs.

\begin{thm}\label{thm:parametrize}
    Let Assumption~\ref{ass:FG} hold.
    Define $Q(\alpha)$ and $R(\alpha)$ as in \eqref{apx:parametrize}.
    Then, $\el^0(Q(\alpha),R(\alpha))\supseteq\mathcal{T}^0$ for any $\alpha\inX{R}{K}_{++}$.
    Moreover, if $(Q^{-1},R)\in\mathcal{F}$, then there exists $\alpha\inX{R}{K}_{++}$ such that $\el^0(Q(\alpha),R(\alpha))\subseteq\el^0(Q,R)$.
\end{thm}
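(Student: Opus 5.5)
The first claim follows directly from Corollary~\ref{cor:QR}. For any $\alpha\inX{R}{K}_{++}$, the pair $(Q(\alpha),R(\alpha))$ satisfies \eqref{apx:MI_Q} and \eqref{apx:LMI_R} with equality, with the same $\alpha$ in both. The converse part of Corollary~\ref{cor:QR} then gives $Q(\alpha)\succ 0$, $R(\alpha)\succ0$ and $(Q(\alpha)^{-1},R(\alpha))\in\mathcal{F}$. By Proposition~\ref{prp:LMI}, $\el^0(Q(\alpha),R(\alpha))\supseteq\mathcal{T}^0$. Positive definiteness of $Q(\alpha)$ and $R(\alpha)$ uses the full row rank of $F$ and the full column rank of $G$ from Assumption~\ref{ass:FG}; this is already handled inside the corollary's proof.

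For the second claim, take $(Q^{-1},R)\in\mathcal{F}$. By the first statement of Corollary~\ref{cor:QR} there exists $\alpha\inX{R}{K}_{++}$ such that
\begin{equation}
Q\succeq Q(\alpha),\qquad R\succeq R(\alpha).
\end{equation}
I will use this same $\alpha$ and show that $\el^0(Q(\alpha),R(\alpha))\subseteq\el^0(Q,R)$ follows from these two Loewner inequalities.

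For the monotonicity step, take $X\in\el^0(Q(\alpha),R(\alpha))$, so that $X^\top Q(\alpha)^{-1}X\preceq R(\alpha)$. Since $Q\succeq Q(\alpha)\succ0$, operator antitonicity of inversion gives $Q^{-1}\preceq Q(\alpha)^{-1}$. Hence
\begin{equation}
X^\top Q^{-1}X\preceq X^\top Q(\alpha)^{-1}X\preceq R(\alpha)\preceq R,
\end{equation}
which means $X\in\el^0(Q,R)$. An alternative route is the explicit form in Proposition~\ref{prp:explicit}: writing $Q(\alpha)^{1/2}=Q^{1/2}W$ with $\norm{W}_2\le1$, and similarly for $R$, gives the same inclusion. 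The inverse argument is shorter, so I will use it.

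The only delicate point is making sure both inequalities hold with a single common $\alpha$ that is strictly positive. Corollary~\ref{cor:QR} provides exactly this: its first part shows that any $\alpha\inX{R}{K}_+$ certifying membership in $\mathcal{F}$ must lie in $\mathbb{R}^K_{++}$. I also need $Q(\alpha)\succ0$ in order to invert it, and the first claim already gives this.
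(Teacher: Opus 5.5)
Your proposal is correct and follows the paper's proof. The first claim comes from Corollary~\ref{cor:QR} together with Proposition~\ref{prp:LMI}. The second uses the same $\alpha\in\mathbb{R}^K_{++}$ from Corollary~\ref{cor:QR} to obtain $Q(\alpha)\preceq Q$ and $R(\alpha)\preceq R$, and your chain $X^\top Q^{-1}X\preceq X^\top Q(\alpha)^{-1}X\preceq R(\alpha)\preceq R$ fills in the step the paper summarizes as ``from the definition of matrix ellipsoids.''
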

\begin{proof}
    The first statement is evident from Corollary~\ref{cor:QR}. We prove the second statement.
    Suppose $(Q^{-1},R)\in\mathcal{F}$. By Corollary~\ref{cor:QR}, there exists $\alpha\inX{R}{K}_{++}$ satisfying \eqref{apx:MI_Q} and \eqref{apx:LMI_R}. By definition, these inequalities are equivalent to $Q(\alpha)\preceq Q$ and $R(\alpha)\preceq R$, respectively. From the definition of matrix ellipsoids, we obtain $\el^0(Q(\alpha),R(\alpha))\subseteq\el^0(Q,R)$.
\end{proof}

The parameterization in \eqref{apx:parametrize} serves as a natural extension of the vector-valued ellipsoid parameterization proposed in \cite{ellip:durieu_multi-input_2001}.
Indeed, by setting $r=1$ and fixing the matrices as $R(\alpha)=G_k^\top R_k G_k=1$, \eqref{apx:parametrize} reduces to
\begin{align}\label{apx:parametrize_vec}
\textstyle{
    Q(\alpha)=\sum_{k=1}^K\frac{1}{\alpha_k}F_kQ_kF_k^\top,\quad
    \sum_{k=1}^K \alpha_k =1,
    }
\end{align}
which is equivalent to~\cite[Eq. (36)]{ellip:durieu_multi-input_2001}.

\section{Optimal Matrix Ellipsoid}\label{sec:opt}
We consider finding the optimal matrix ellipsoid, with respect to the trace or log-determinant criterion, among the matrix ellipsoids parameterized by \eqref{apx:parametrize}.
Instead of \eqref{prb:prb}, we consider the following problem:
\begin{equation}\label{opt:opt}
   \min_{\alpha\in\mathbb{R}^K_{++}} f(\alpha)=F(R(\alpha)\otimes Q(\alpha)).
\end{equation}
An important property is its invariance to the scaling of $\alpha$, i.e., $f(c\alpha)=f(\alpha)$ holds for any $c>0$.
This originates from the fact that, by \eqref{apx:parametrize} and the properties of the Kronecker product, we have
\begin{equation}
   R(c\alpha)\otimes Q(c\alpha)=(cR(\alpha))\otimes(c^{-1}Q(\alpha))=R(\alpha)\otimes Q(\alpha).
\end{equation}
Due to this invariance, we can fix the scale of the parameter $\alpha$ without loss of generality.
Introducing the linear constraint $\tr(R(\alpha))=r$, we hereafter consider the normalized problem:
\begin{equation}\label{opt:opt_normarized}
   \min_{\alpha\in\mathcal{S}} f(\alpha)=F(R(\alpha)\otimes Q(\alpha)),
\end{equation}
where
\begin{equation}
   \mathcal{S}=\left\{\alpha\in\mathbb{R}^K_{++}\;\middle|\; \tr(R(\alpha))=\sum_{k=1}^K\alpha_k \tr(G_k^\top R_k G_k)=r\right\}.
\end{equation}
%Hereafter, we consider the normalized problem \eqref{opt:opt_normarized} instead of \eqref{opt:opt}.

\subsection{Trace Criterion}
First, we consider the case where $F(X)=\tr(X)$ in \eqref{opt:opt_normarized}.
We transform the objective function as
\begin{align}
   f(\alpha)=\tr(R(\alpha)\otimes Q(\alpha))=\tr(R(\alpha))\tr (Q(\alpha)).
\end{align}
Under the condition $\alpha\in\mathcal{S}$, the term $\tr(R(\alpha))$ is a constant.
Consequently, \eqref{opt:opt_normarized} reduces to the following convex program:
\begin{equation}\label{opt:trace}
\textstyle{
   \min_{\alpha\in\mathbb{R}^K_{++}} \sum_{k=1}^K \frac{u_k}{\alpha_k} \quad
   \text{subject to } \sum_{k=1}^K v_k\alpha_k=1,
   }
\end{equation}
where $u_k=\tr(F_kQ_kF_k^\top)$ and $v_k=r^{-1}\tr(G_k^\top R_kG_k)$.
%Since~\eqref{opt:trace} shares a common structure with the vector-valued ellipsoid case~\cite[Theorem 4.4]{ellip:durieu_multi-input_2001},
%the analytic solution can be obtained using the method of Lagrange multipliers.
We can derive its analytic solution.

\begin{prp}\label{prp:trace}
    Suppose $F(X)=\tr(X)$ in~\eqref{opt:opt_normarized}.
    Then, the optimal solution $\alpha^\star\in\mathcal{S}$ is given by
    \begin{equation}
        \alpha^\star_k=\frac{r\sqrt{\tr(F_kQ_kF_k^\top)/\tr(G_k^\top R_k G_k)}}{\sum_{l=1}^K\sqrt{\tr(F_lQ_lF_l^\top) \tr(G_l^\top R_l G_l)}}.
    \end{equation}
\end{prp}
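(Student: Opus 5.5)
The plan is to solve the reduced convex program \eqref{opt:trace} directly. By the preceding derivation, on $\mathcal{S}$ the objective equals $r\sum_k u_k/\alpha_k$, where $u_k=\tr(F_kQ_kF_k^\top)$ and $v_k=r^{-1}\tr(G_k^\top R_kG_k)$.

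First, I will note that $u_k>0$ and $v_k>0$ for every $k$. This holds because $Q_k\succ0$ and $F_k\neq0$ give $F_kQ_kF_k^\top\succeq0$ and nonzero, so its trace is positive; the same argument applies to $G_k^\top R_kG_k$ via Assumption~\ref{ass:FG}. Consequently, the objective is strictly convex on $\mathbb{R}^K_{++}$, since each term $u_k/\alpha_k$ is strictly convex. The feasible set is the intersection of an affine hyperplane with the open orthant, so it is convex and nonempty. The objective also tends to $+\infty$ as any $\alpha_k\to0^+$. Hence a minimizer exists in the relative interior and is unique.

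Second, rather than using Lagrange multipliers and then verifying sufficiency, I will obtain global optimality in one step from the Cauchy--Schwarz inequality. For any feasible $\alpha$,
\begin{equation}
\Bigl(\sum_{k=1}^K\sqrt{u_kv_k}\Bigr)^2=\Bigl(\sum_{k=1}^K\sqrt{\tfrac{u_k}{\alpha_k}}\sqrt{v_k\alpha_k}\Bigr)^2\le\Bigl(\sum_{k=1}^K\tfrac{u_k}{\alpha_k}\Bigr)\Bigl(\sum_{k=1}^K v_k\alpha_k\Bigr)=\sum_{k=1}^K\tfrac{u_k}{\alpha_k}.
\end{equation}
Equality holds if and only if $\sqrt{u_k/\alpha_k}=\lambda\sqrt{v_k\alpha_k}$ for a common $\lambda>0$, that is, $\alpha_k\propto\sqrt{u_k/v_k}$. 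Imposing $\sum_kv_k\alpha_k=1$ fixes the constant, giving
\begin{equation}
\alpha_k^\star=\frac{\sqrt{u_k/v_k}}{\sum_l\sqrt{u_lv_l}}.
\end{equation}
This $\alpha^\star$ is the unique minimizer, and the minimum value is $(\sum_l\sqrt{u_lv_l})^2$.

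Finally, I will substitute back $u_k=\tr(F_kQ_kF_k^\top)$ and $v_k=r^{-1}\tr(G_k^\top R_kG_k)$. The numerator becomes $\sqrt{r}\sqrt{\tr(F_kQ_kF_k^\top)/\tr(G_k^\top R_kG_k)}$, and the denominator becomes $r^{-1/2}\sum_l\sqrt{\tr(F_lQ_lF_l^\top)\tr(G_l^\top R_lG_l)}$. Their ratio carries a factor of exactly $r$, which gives the stated formula. I will also check that $\alpha^\star\in\mathcal{S}$: it is positive, and $\sum_k\alpha_k^\star\tr(G_k^\top R_kG_k)=r$.

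The only delicate point is this $r$-scaling bookkeeping, together with confirming that $u_k$ and $v_k$ are strictly positive so that every square root and quotient is well defined.
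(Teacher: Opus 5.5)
Your proof is correct, but it takes a different route from the paper. The paper only invokes the method of Lagrange multipliers and defers the details to the vector-valued case in Durieu et al.\ (Theorem 4.4). In that approach, stationarity of $\sum_k u_k/\alpha_k+\mu(\sum_k v_k\alpha_k-1)$ gives $\alpha_k=\sqrt{u_k/(\mu v_k)}$, the constraint fixes $\mu$, and global optimality then rests on the convexity of \eqref{opt:trace}, which the paper asserts just before the proposition.

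Your Cauchy--Schwarz argument replaces both the stationarity computation and the sufficiency step with one inequality valid for every feasible $\alpha$. From it you get three things at once: the minimum is attained, $\alpha^\star$ is the unique minimizer, and the optimal value is $f(\alpha^\star)=r\bigl(\sum_l\sqrt{u_lv_l}\bigr)^2=\bigl(\sum_l\sqrt{\tr(F_lQ_lF_l^\top)\tr(G_l^\top R_lG_l)}\bigr)^2$. That last expression is visibly invariant under $\alpha\mapsto c\alpha$, as it should be.

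As a consequence, your first paragraph's existence and strict-convexity discussion is superfluous, though harmless. What actually matters is the positivity of $u_k$ and $v_k$, which keeps the square roots and the equality case well defined, and you justify it correctly from $F_k,G_k\neq0$.

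The two routes buy different things. The Lagrange route is more mechanical, ties the result to the vector-valued literature, and adapts more readily to other separable objectives or extra constraints. Yours is self-contained and gives a directly checkable certificate of global optimality. Your $r$-scaling bookkeeping and the check that $\alpha^\star\in\mathcal{S}$ are both right.
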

\begin{proof}
    This result is derived via the method of Lagrange multipliers.
    The details are omitted as they are similar to those for the vector-valued case~\cite[Theorem 4.4]{ellip:durieu_multi-input_2001}.
\end{proof}

Using this analytic solution, the optimal bounding ellipsoid $\el^0(Q(\alpha^\star), R(\alpha^\star))$ can be instantly identified from the parameterized family without relying on complex LMIs or nonlinear optimization solvers.
Since it requires only matrix multiplications and trace computations without the need for iterative optimization, this analytical solution is an efficient tool for accurately obtaining the bounding matrix ellipsoid with minimal computational cost.

\subsection{Log-Determinant Criterion}\label{subsec:Det}

Consider the case where $F(X)=\logdet(X)$ in \eqref{opt:opt_normarized}.
For vector-valued ellipsoids, \eqref{opt:opt_normarized} easily reduces to a convex program~\cite[Theorem 4.3]{ellip:durieu_multi-input_2001}.
However, in contrast, the objective function is non-convex in the matrix-valued case, making it difficult to obtain the global optimum.
To address this issue, we develop an algorithm to find a stationary point based on the MM framework, leveraging the concavity of the log-determinant function.

The objective function $f(\alpha)$ can be written as follows:
\begin{equation}\label{opt:fdecom}
    \begin{split}
        f(\alpha)&=\logdet(R(\alpha)\otimes Q(\alpha))\\
        &=
        q\, \underbrace{\logdet(R(\alpha))}_{=: f_R(\alpha)}+r\, \underbrace{\logdet(Q(\alpha))}_{=:f_Q(\alpha)}.
    \end{split}
\end{equation}
As the eigenvalues of $Q(\alpha)$ or $R(\alpha)$ approach $0$, $f(\alpha)$ diverges to negative infinity.
This indicates that the volume of $\el(C,Q(\alpha),R(\alpha))$ degenerates to $0$.
To avoid this situation and ensure that $f(\alpha)$ takes a bounded value for any $\alpha\inX{R}{K}_{++}$, we introduce the following assumption.

\begin{ass}\label{ass:H}
    There exists a positive constant $\lambda_{\rm min}>0$ such that $R(\alpha)\otimes Q(\alpha)\succ\lambda_{\rm min}I_{qr}$ holds for any $\alpha\in\mathbb{R}^K_{++}$.
\end{ass}

Under this assumption, it clearly holds that $f(\alpha)> qr \log(\lambda_{\rm min})$.

\subsubsection{Derivation of MM Algorithm}

Under Assumption~\ref{ass:H}, we consider the optimization problem~\eqref{opt:opt_normarized}.
As shown in \cite[Theorem 4.3]{ellip:durieu_multi-input_2001}, the function $f_Q(\alpha)$ is convex.
However, since $f_R(\alpha)$ is the composition of the concave function $\logdet(X)$ and the linear mapping $R(\alpha)$ with respect to $\alpha$, it is concave overall~\cite[Sec. 3.2.3]{boyd2004convex}.
%It is difficult to completely avoid the non-convexity introduced by $f_R(\alpha)$ within the standard convex optimization framework.
Note that, in the vector-valued case ($r=1$), convexity is guaranteed because imposing the constraint $\alpha \in \mathcal{S}$ fixes $R(\alpha) = 1$ and yields $f_R(\alpha) = 0$. In contrast, for matrix representations with $r \ge 2$, the concavity of $f_R(\alpha)$ remains even under the constraint $\alpha \in \mathcal{S}$.

%Therefore, we propose an algorithm that sequentially updates the solution by constructing a surrogate function that upper-bounds $f(\alpha)$ via the first-order approximation of the log-determinant function, and minimizing it.
%Since the log-determinant function is concave, its first-order approximation at any point provides an upper bound on the original function (Proposition~\ref{aprp:logdet} in the Appendix).
%Applying this to $f_R(\alpha)$ and $f_Q(\alpha)$ yields a surrogate function that upper-bounds $f(\alpha)$.

Due to this inherent non-convexity, finding a global optimal solution is generally intractable. To address this, we develop an iterative algorithm based on the MM framework that guarantees convergence to the set of stationary points. Specifically, we propose an algorithm that sequentially updates the solution by constructing and minimizing a surrogate function that upper-bounds the original objective $f(\alpha)$. The construction of this surrogate exploits the concavity of the log-determinant function; its first-order approximation at any given point naturally provides a tight upper bound on the original function (see Proposition~\ref{aprp:logdet} in the Appendix). By applying this first-order approximation to $f_R(\alpha)$ and combining it with $f_Q(\alpha)$, we obtain a tractable surrogate function that bounds $f(\alpha)$ from above at each iteration.

\begin{prp}\label{prp:surrogate_function}
    Define a function $g(\cdot|\alphat{t}): \mathbb{R}^K_{++}\rightarrow \mathbb{R}$ as
    \begin{equation}
    \textstyle{
        g(\alpha|\alphat{t})\coloneqq \sum_{k=1}^K\left(\Akt\alpha_k +\frac{\Bkt}{\alpha_k}\right)+\Ct,
        }
    \end{equation}
    where we denote $\Rt=R(\alphat{t})$, $\Qt=Q(\alphat{t})$, $\Akt=q\tr\left({\Rt}^{-1}G_k^\top R_kG_k\right)$, $\Bkt=r\tr\left({\Qt}^{-1}F_kQ_kF_k^\top\right)$ and $\Ct = f(\alphat{t})-2qr$.
    % \begin{align}
    %     \Akt&=q\tr\left({\Rt}^{-1}G_k^\top R_kG_k\right),\\ 
    %     \Bkt&=r\tr\left({\Qt}^{-1}F_kQ_kF_k^\top\right),
    % \end{align}
    % and $\Ct = f(\alphat{t})-2qr$.
    Then, $g(\alpha|\alphat{t})$ satisfies
    \begin{equation}\label{opt:surrogate}
        g(\alpha|\alphat{t})\geq f(\alpha).
    \end{equation}
    Furthermore, the equality in \eqref{opt:surrogate} holds when $\alpha=\alphat{t}$.
    % Moreover, when $\alpha=\alphat{t}$, it holds that
    % \begin{equation}
    %     g(\alphat{t}|\alphat{t})=f(\alphat{t}),\quad \nabla g(x|\alphat{t})\rvert_{x=\alphat{t}}=\nabla f(\alphat{t}).
    % \end{equation}
\end{prp}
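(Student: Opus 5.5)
The plan is to bound the two terms of the decomposition \eqref{opt:fdecom} separately, using the concavity of $\logdet$ on $\mathbb{S}^n_{++}$. Concavity gives the tangent-line inequality of Proposition~\ref{aprp:logdet} in the Appendix: for $X,Y\in\mathbb{S}^n_{++}$,
\begin{equation}
\logdet(X)\le \logdet(Y)+\tr\left(Y^{-1}(X-Y)\right).
\end{equation}
The key observation is that $R(\alpha)$ is linear in $\alpha$, while $Q(\alpha)$ is linear in the reciprocal vector $(1/\alpha_1,\dots,1/\alpha_K)$. Each tangent bound will therefore come out as an explicit sum, in $\alpha_k$ for $f_R$ and in $1/\alpha_k$ for $f_Q$.

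First I check that the inequality may be applied at the required points. For any $\alpha\in\mathbb{R}^K_{++}$, the matrices $Q(\alpha)$ and $R(\alpha)$ are positive definite. Indeed, $Q(\alpha)=F\,\diag_{k\in[1,K]}(\alpha_k^{-1}Q_k)F^\top$ with $F$ of full row rank, and $R(\alpha)=G^\top\diag_{k\in[1,K]}(\alpha_kR_k)G$ with $G$ of full column rank, by Assumption~\ref{ass:FG}; this is the same argument as in the proof of Corollary~\ref{cor:QR}. In particular $\Rt$ and $\Qt$ are invertible, so $\Akt$, $\Bkt$ and $\Ct$ are well defined.

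Next, I apply the tangent inequality with $X=R(\alpha)$ and $Y=\Rt$. Linearity of the trace gives $\tr({\Rt}^{-1}R(\alpha))=\sum_k\alpha_k\tr({\Rt}^{-1}G_k^\top R_kG_k)$, and $\tr({\Rt}^{-1}\Rt)=r$. Multiplying by $q$ yields
\begin{equation}
q f_R(\alpha)\le q f_R(\alphat{t})+\sum_{k=1}^K\Akt\alpha_k-qr.
\end{equation}
Similarly, I apply it with $X=Q(\alpha)$ and $Y=\Qt$, using $\tr({\Qt}^{-1}Q(\alpha))=\sum_k\alpha_k^{-1}\tr({\Qt}^{-1}F_kQ_kF_k^\top)$ and $\tr({\Qt}^{-1}\Qt)=q$. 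Multiplying by $r$ yields
\begin{equation}
r f_Q(\alpha)\le r f_Q(\alphat{t})+\sum_{k=1}^K\frac{\Bkt}{\alpha_k}-qr.
\end{equation}
Adding the two inequalities and using $f(\alphat{t})=qf_R(\alphat{t})+rf_Q(\alphat{t})$ gives exactly $f(\alpha)\le g(\alpha|\alphat{t})$ with $\Ct=f(\alphat{t})-2qr$.

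For the equality claim, set $\alpha=\alphat{t}$. Both tangent inequalities then hold with equality, since $X=Y$ in each. Equivalently, one can check directly that $\sum_k\Akt\alphat{t}_k=q\tr({\Rt}^{-1}\Rt)=qr$ and $\sum_k\Bkt/\alphat{t}_k=r\tr({\Qt}^{-1}\Qt)=qr$, so these terms cancel the $-2qr$ in $\Ct$.

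There is no real obstacle here. The only points that need care are the positive definiteness of $Q(\alpha)$ and $R(\alpha)$, which the tangent inequality requires and which Assumption~\ref{ass:FG} supplies, and the observation that the tangent bound for $f_Q$ is taken in the matrix variable $Q$. This makes the convex part $f_Q$ upper-bounded as well, even though it is not concave in $\alpha$.
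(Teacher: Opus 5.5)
Your proposal is correct and follows essentially the same route as the paper. Both apply the tangent bound of Proposition~\ref{aprp:logdet} to $f_R$ at $\Rt$ and to $f_Q$ at $\Qt$ in the matrix variable, expand the traces using the linearity of $R(\alpha)$ in $\alpha_k$ and of $Q(\alpha)$ in $1/\alpha_k$, and sum with weights $q$ and $r$. Your explicit check that $Q(\alpha)$ and $R(\alpha)$ are positive definite under Assumption~\ref{ass:FG} is a small detail the paper leaves implicit.
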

\begin{proof}
    Applying Proposition~\ref{aprp:logdet} to $f_R(\alpha)$, we obtain
    \begin{align}
        f_R(\alpha)\le f_R(\alphat{t})+\tr\left({\Rt}^{-1}(R(\alpha)-\Rt)\right).\label{prf:frfirst}
    \end{align}
    Since $\tr({\Rt}^{-1}R(\alpha))=\sum_{k=1}^K \Akt\alpha_k/q$, it holds that
    \begin{align}
    \textstyle{
        f_R(\alpha)\le \sum_{k=1}^K\left(\frac{\Akt\alpha_k}{q}\right) +f_R(\alphat{t})-r.\label{prf:fr}
        }
    \end{align}
    Similarly, for $f_Q(\alpha)$, we have
    \begin{align}\label{prf:fq}
    \textstyle{
        f_Q(\alpha)\le\sum_{k=1}^K \left(\frac{\Bkt}{r\alpha_k}\right)+f_Q(\alphat{t})-q.
        }
    \end{align}
    Since $f(\alpha)=qf_R(\alpha)+rf_Q(\alpha)$, \eqref{opt:surrogate} follows.
    Moreover, when $\alpha=\alphat{t}$, the equality in \eqref{prf:frfirst} clearly holds.
    Since the equality also holds in \eqref{prf:fq}, it holds in \eqref{opt:surrogate} as well.
\end{proof}

In the MM algorithm~\cite{math:sum2017MMalgo}, minimizing the surrogate function $g(\alpha|\alphat{t})$ monotonically decreases the original objective function $f(\alpha)$.
The function $g(\alpha|\alphat{t})$ possesses a highly favorable structure for optimization.
Specifically, it separates into a sum of mutually independent strongly convex functions $\Akt\alpha_k+\Bkt/\alpha_k$ with respect to each element $\alpha_k$.
However, minimizing this under the constraint of the original feasible region $\mathcal{S}$ destroys the decoupling among the variables $\alpha_k$.
Therefore, we propose an approach that analytically solves the unconstrained minimization problem for $g(\alpha|\alphat{t})$ and then projects the solution onto $\mathcal{S}$.
Notably, this projection can be performed without altering the value of the function $f$.

First, for the unconstrained optimization of $g$, the minimizer can be obtained analytically.

\begin{prp}\label{prp:unconstrained_opt}
    Consider the optimization problem $\min_{\alpha\in\mathbb{R}^K_{++}}g(\alpha|\alphat{t})$.
    The minimizer $\talphat$ of this problem is obtained by
    \begin{equation}\label{opt:talpha}
        \talphat_k =\sqrt{\frac{\Bkt}{\Akt}}=\sqrt{\frac{r\tr\left({\Qt}^{-1}F_kQ_kF_k^\top\right)}{q\tr\left({\Rt}^{-1}G_k^\top R_kG_k\right)}}.
    \end{equation}
\end{prp}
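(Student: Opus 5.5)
The plan is to exploit the separable structure of $g(\cdot|\alphat{t})$. Since $\Ct$ does not depend on $\alpha$, minimizing $g(\alpha|\alphat{t})$ over $\mathbb{R}^K_{++}$ splits into $K$ independent scalar problems $\min_{\alpha_k>0} h_k(\alpha_k)$, where $h_k(\alpha_k)\coloneqq \Akt\alpha_k+\Bkt/\alpha_k$. A point $\talphat$ minimizes $g$ if and only if each component $\talphat_k$ minimizes $h_k$. So it suffices to solve each one-dimensional problem.

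The first step is to check that the coefficients are strictly positive, so that each $h_k$ is well behaved.

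For $\Akt$: we have $\alphat{t}\in\mathbb{R}^K_{++}$, and $\Rt=R(\alphat{t})\succ 0$ by Corollary~\ref{cor:QR} under Assumption~\ref{ass:FG}, so $(\Rt)^{-1}\succ 0$. Also $G_k^\top R_k G_k\succeq 0$, and it is nonzero because $G_k\neq 0$ and $R_k\succ 0$. The trace of the product of a positive definite matrix and a nonzero positive semidefinite matrix is positive. To see this, write it as $\tr\bigl(R_k^{\frac12}G_k(\Rt)^{-1}G_k^\top R_k^{\frac12}\bigr)$; the matrix inside is PSD and nonzero. Hence $\Akt>0$.

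For $\Bkt$: the same argument applies with $(\Qt)^{-1}$ and $F_kQ_kF_k^\top$, using $F_k\neq0$. Hence $\Bkt>0$.

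Given positive coefficients, $h_k$ is strictly convex on $(0,\infty)$, since $h_k''(\alpha_k)=2\Bkt/\alpha_k^3>0$. It is also coercive, because $h_k\to\infty$ both as $\alpha_k\to0^+$ and as $\alpha_k\to\infty$. Therefore $h_k$ has a unique minimizer, characterized by the stationarity condition $h_k'(\alpha_k)=\Akt-\Bkt/\alpha_k^2=0$. This gives $\talphat_k=\sqrt{\Bkt/\Akt}$. Alternatively, AM--GM gives $h_k(\alpha_k)\ge 2\sqrt{\Akt\Bkt}$, with equality exactly at that point. Substituting the definitions of $\Akt$ and $\Bkt$ yields the explicit expression in~\eqref{opt:talpha}.

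The only nontrivial point is the positivity of $\Akt$ and $\Bkt$. It is what guarantees that the minimizer exists, lies in the interior $\mathbb{R}^K_{++}$, and is unique. Everything else is elementary calculus.
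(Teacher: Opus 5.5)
Your proposal is correct and follows essentially the same route as the paper, which invokes the convexity of the separable function $g(\cdot|\alphat{t})$ and solves the componentwise first-order condition $\Akt - \Bkt/\alpha_k^2 = 0$. Your explicit check that $\Akt,\Bkt>0$ (from $\Rt,\Qt\succ 0$ and $F_k,G_k\neq 0$), and your use of strict convexity plus coercivity, fill in details the paper leaves implicit; this is also slightly more accurate than the paper's ``strongly convex,'' which does not hold uniformly on $\mathbb{R}^K_{++}$.
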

\begin{proof}
    Since $g$ is strongly convex, the analytical solution \eqref{opt:talpha} 
    is obtained by solving the first-order condition:
    \begin{equation}\label{prf:diff_g}
        \frac{\partial g(\alpha|\alphat{t})}{\partial\alpha_k}=\Akt-\frac{\Bkt}{\alpha_k^2}=0.
    \end{equation}
\end{proof}

Next, exploiting the scale-invariance of the objective function $f(c\alpha)=f(\alpha)$, we choose $\alphat{t+1}\in\mathcal{S}$ such that $f(\alphat{t+1})=f(\talphat)$:
\begin{equation}\label{opt:proj}
    \alphat{t+1}=\frac{r}{\tr(R(\talphat))}\talphat.
\end{equation}
Consequently, we propose the optimization procedure detailed in Algorithm~\ref{alg:opt_convergence}.

\begin{algorithm}[t]
    \caption{MM-based Optimization}
    \label{alg:opt_convergence}
    \SetKwInOut{Input}{Input}
    \SetKwInOut{Output}{Output}

    \Input{Initial point $\alpha^{(0)} \in \mathcal{S}$, tolerance $\epsilon > 0$}
    \Output{Optimized parameter $\alpha^{(t)}$}

    $t \gets 0$\;
    Compute initial objective value $f(\alpha^{(0)})$\;
    
    \Repeat{$|f(\alpha^{(t)}) - f(\alpha^{(t-1)})| < \epsilon$}{
        $t \gets t + 1$\;
        Update $\alpha^{(t)}$ according to \eqref{opt:talpha} and \eqref{opt:proj} \label{line:eng_update}\;
        Compute current objective value $f(\alpha^{(t)})$\;
    }
    \Return $\alpha^{(t)}$\;
\end{algorithm}

The primary advantage of Algorithm~\ref{alg:opt_convergence} is that the update rules \eqref{opt:talpha} and \eqref{opt:proj} are given in closed form, allowing instant computation from the current parameter $\alpha^{(t)}$.
Compared to first-order methods like projected gradient descent, \eqref{opt:talpha} eliminates the need for line searches to determine the step size.
Furthermore, compared to interior-point or Newton methods, which typically require $O(K^3)$ operations per iteration due to Hessian construction and linear equation solving, our proposed update relies solely on matrix multiplications and trace computations, reducing the computational complexity per iteration to $O(K)$.
This dramatically improves scalability with respect to the number of matrix ellipsoids $K$.
Note that, in Algorithm~\ref{alg:opt_convergence}, the choice of the initial point strongly affects its behavior.
A reasonable choice for the initial point is the analytical solution obtained under the trace criterion in Proposition~\ref{prp:trace}.

\subsubsection{Convergence of Algorithm~\ref{alg:opt_convergence}}
We discuss the convergence properties of Algorithm~\ref{alg:opt_convergence}.
In this subsection, we prove that the sequence of objective values $\{f(\alphat{t})\}$ generated by the algorithm monotonically decreases and converges to a finite limit, and that the parameter sequence $\{\alphat{t}\}$ asymptotically approaches the set of stationary points defined by $\Gamma\coloneqq \{\alpha\in\mathcal{S}\mid \nabla f(\alpha)=0\}.$
%\begin{equation}
%    \Gamma\coloneqq \{\alpha\in\mathcal{S}\mid \nabla f(\alpha)=0\}.
%\end{equation}
We present the following theorem.

\begin{thm}\label{thm:convergence}
    Let Assumption~\ref{ass:FG} and Assumption~\ref{ass:H} hold.
    The sequence $\{\alphat{t}\}_{t=0}^\infty$ generated by \eqref{opt:talpha} and \eqref{opt:proj} from an initial point $\alphat{0}$ asymptotically approaches the stationary point set $\Gamma$:
    \begin{equation}\label{opt:convergence}
        \lim_{t \to \infty} \inf_{\alpha^* \in \Gamma} \|\alpha^{(t)} - \alpha^*\| = 0.
    \end{equation}
    Furthermore, the sequence of objective values $\{f(\alpha^{(t)})\}_{t=0}^\infty$ monotonically decreases and converges to a finite limit. % $f^* \ge f_{\min}$.
\end{thm}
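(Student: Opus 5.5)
The proof follows the standard MM convergence template. Monotonicity and finiteness of the limit come first, then compactness of the iterates, and finally stationarity of every limit point via a fixed-point argument.

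\textbf{Step 1 (monotone decrease and finite limit).} By Proposition~\ref{prp:surrogate_function} and Proposition~\ref{prp:unconstrained_opt}, and using the scale invariance $f(c\alpha)=f(\alpha)$,
\begin{equation}
f(\alphat{t+1})=f(\talphat)\le g(\talphat|\alphat{t})\le g(\alphat{t}|\alphat{t})=f(\alphat{t}).
\end{equation}
Assumption~\ref{ass:H} gives $f(\alpha)>qr\log\lambda_{\rm min}$. Hence $\{f(\alphat{t})\}$ is nonincreasing and bounded below, so it converges to a finite limit $f^\infty$.

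\textbf{Step 2 (iterates stay in a compact subset of $\mathcal{S}$).} By construction every $\alphat{t}\in\mathcal{S}$, and $\mathcal{S}$ is bounded because each $v_k>0$ (since $G_k\neq 0$ and $R_k\succ0$). We must still exclude accumulation at the boundary where some $\alpha_k\to0$. The plan is to show that the sublevel set $\{\alpha\in\mathcal{S}\mid f(\alpha)\le f(\alphat{0})\}$ is compact.
\begin{itemize}
\item If $\alpha_k\to0$ with $\alpha$ on $\mathcal{S}$, then $Q(\alpha)\succeq \alpha_k^{-1}F_kQ_kF_k^\top$ blows up in the direction of $\mathrm{range}(F_k)\neq\{0\}$.
\item $R(\alpha)$ stays bounded on $\mathcal{S}$, and its smallest eigenvalue is bounded below by Assumption~\ref{ass:H} combined with boundedness of $Q$ on that face. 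I expect this to need a careful argument.
\end{itemize}
Using $\logdet Q\ge \log\lambda_{\max}(Q)+(q-1)\log\lambda_{\min}(Q)$ together with Assumption~\ref{ass:H} to bound the remaining eigenvalues below, $f\to+\infty$. Therefore the iterates remain in a compact set $\mathcal{K}\subset\mathcal{S}$ bounded away from the boundary, and $A_k^{(t)},B_k^{(t)}$ are bounded above and below by positive constants on $\mathcal{K}$.

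\textbf{Step 3 (limit points are stationary).} Define the continuous update map $M(\alpha)$ given by \eqref{opt:talpha} followed by \eqref{opt:proj}. Take any convergent subsequence $\alphat{t_j}\to\bar\alpha\in\mathcal{K}$. Continuity of $f$ and $M$ gives
\begin{equation}
f(M(\bar\alpha))=\lim_j f(\alphat{t_j+1})=f^\infty=f(\bar\alpha).
\end{equation}
The chain in Step 1 then forces
\begin{equation}
g(\tilde\alpha|\bar\alpha)=g(\bar\alpha|\bar\alpha),\qquad \tilde\alpha=\text{the unconstrained minimizer of } g(\cdot|\bar\alpha).
\end{equation}
Because $g(\cdot|\bar\alpha)$ is strictly convex, $\bar\alpha$ is itself its unconstrained minimizer, so $\nabla_\alpha g(\alpha|\bar\alpha)|_{\alpha=\bar\alpha}=0$. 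Next we identify this gradient with $\nabla f(\bar\alpha)$. Differentiating,
\begin{equation}
\partial_k f_R=\tr(R^{-1}G_k^\top R_kG_k),\qquad \partial_k f_Q=-\alpha_k^{-2}\tr(Q^{-1}F_kQ_kF_k^\top),
\end{equation}
which gives $\partial_k f(\bar\alpha)=A_k-B_k/\bar\alpha_k^2=\partial_k g(\bar\alpha|\bar\alpha)$. Hence $\nabla f(\bar\alpha)=0$, so $\bar\alpha\in\Gamma$. The unconstrained gradient is used here, consistent with the definition of $\Gamma$.

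\textbf{Step 4 (distance to $\Gamma$).} Suppose \eqref{opt:convergence} fails. Then some subsequence stays at distance at least $\varepsilon>0$ from $\Gamma$. By compactness of $\mathcal{K}$ it has a further convergent subsequence, whose limit lies in $\Gamma$ by Step 3, which is a contradiction.

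The main obstacle is Step 2, namely ruling out $\alpha_k\to0$ or drift to the boundary of $\mathcal{S}$ via coercivity of $f$. If a clean coercivity bound is elusive, the fallback is to show directly from \eqref{opt:talpha} that $\alphat{t+1}_k$ is bounded below uniformly, using the bounds on $\Qt$ and $\Rt$ furnished by the bounded objective value.
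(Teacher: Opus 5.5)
Your Steps 1 and 4 match the paper, and Step 3 is fine. The genuine gap is in Step 2, which you correctly flag as the crux: the route you sketch there fails. Your second bullet claims that the smallest eigenvalue of $R(\alpha)$ stays bounded below as some $\alpha_k\to 0$ on $\mathcal{S}$. It appeals to boundedness of $Q$, which contradicts your first bullet, and the claim is false in general.

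For a counterexample, take $r=2$, $r_1=r_2=1$, $R_1=R_2=1$, $G_1=[1~~0]$, $G_2=[0~~1]$, $F_1=F_2=I_q$. Then $R(\alpha)=\diag(\alpha_1,\alpha_2)$ and $R(\alpha)\otimes Q(\alpha)=\diag(\alpha_1Q(\alpha),\alpha_2Q(\alpha))$ with $\alpha_iQ(\alpha)\succeq Q_i$, so Assumptions~\ref{ass:FG} and~\ref{ass:H} hold. Yet along $\mathcal{S}=\{\alpha_1+\alpha_2=2\}$ with $\alpha_1\to 0$, the smallest eigenvalue of $R(\alpha)$ is $\alpha_1\to 0$ and $f_R(\alpha)\to-\infty$.

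So you cannot bound $qf_R$ from below and let $rf_Q\to+\infty$ carry the divergence. In this example $f\to+\infty$ only because $rf_Q\approx -rq\log\alpha_1$ outweighs $qf_R\approx q\log\alpha_1$. Assumption~\ref{ass:H} controls only products of eigenvalues of $R(\alpha)$ and $Q(\alpha)$, not each factor separately. For the same reason, the bound $\log\lambda_{\max}(Q)+(q-1)\log(\cdot)$ applied to $Q$ alone cannot finish the argument.

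The missing idea is how the paper proves Lemma~\ref{lem:compact}: apply your eigenvalue inequality to $P(\alpha)=R(\alpha)\otimes Q(\alpha)$ instead of to $Q(\alpha)$.
\begin{itemize}
\item Assumption~\ref{ass:H} bounds all $qr$ eigenvalues of $P$ below by $\lambda_{\rm min}$.
\item The largest eigenvalue of $P$ diverges. The paper uses $P(\beta^{(t)})\succeq(\beta^{(t)}_j/\beta^{(t)}_i)(G_j^\top R_jG_j)\otimes(F_iQ_iF_i^\top)$ with $\bar\beta_i=0<\bar\beta_j$. Equivalently, $\lambda_{\max}(P)=\lambda_{\max}(R)\lambda_{\max}(Q)\ge\lambda_{\max}(Q)$, since $\lambda_{\max}(R)\ge\tr(R)/r=1$ on $\mathcal{S}$, and your first bullet gives $\lambda_{\max}(Q)\to\infty$.
\item Hence $f=\logdet P\ge\log\lambda_{\max}(P)+(qr-1)\log\lambda_{\rm min}\to+\infty$, which gives closedness, and hence compactness, of the sublevel set.
\end{itemize}

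With this replacement the rest of your proof goes through. Step 3 is a correct, self-contained substitute for the paper's appeal to Zangwill's Global Convergence Theorem. Your equality chain at $\bar\alpha$, together with strict convexity of $g(\cdot|\bar\alpha)$ and gradient matching, is exactly the contrapositive of the strict-decrease part of Lemma~\ref{lem:monotonedecrease}. Continuity of $M$ and $f$ on the compact sublevel set then does what Zangwill's theorem supplies in the paper.
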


We prove this theorem based on Zangwill's Global Convergence Theorem~\cite[Sec. 7.6]{math:luenberger2021linearnonlinear}.
As a preparation, we first establish the compactness of the sublevel set.

\begin{lem}\label{lem:compact}
    Let Assumption~\ref{ass:FG} and Assumption~\ref{ass:H} hold.
    Let $\alphat{0}\in\mathcal{S}$.
    Define a sublevel set as $\Omega\coloneqq \{ \alpha\in\mathcal{S}\mid f(\alpha)\le f(\alphat{0})\}$.
    Then, $\Omega$ is a bounded and closed set. % (i.e., compact).
\end{lem}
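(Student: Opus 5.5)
Closedness and boundedness of $\Omega$ will be proved separately, with most of the work in boundedness.

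\emph{Boundedness.} Since $\mathcal{S}\subset\mathbb{R}^K_{++}$ and every $\alpha\in\mathcal{S}$ satisfies $\sum_k \alpha_k\tr(G_k^\top R_kG_k)=r$, with each coefficient $\tr(G_k^\top R_kG_k)>0$ (because $G_k\neq0$ and $R_k\succ0$ by Assumption~\ref{ass:FG}), each coordinate obeys $0<\alpha_k\le r/\tr(G_k^\top R_kG_k)$. So $\mathcal{S}$, and hence $\Omega$, is bounded. The substantive issue is that $\mathcal{S}$ is not closed: it excludes faces where some $\alpha_k=0$. Closedness of $\Omega$ in $\mathbb{R}^K$ therefore amounts to showing that $\Omega$ stays away from these faces.

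\emph{Closedness.} Take a sequence $\alpha^{(j)}\in\Omega$ with $\alpha^{(j)}\to\bar\alpha$. The constraint is linear, so $\bar\alpha\in\mathbb{R}^K_+$ satisfies $\sum_k\bar\alpha_k\tr(G_k^\top R_kG_k)=r$. Suppose, for contradiction, that $\bar\alpha_{k_0}=0$ for some $k_0$. The plan is to show $f(\alpha^{(j)})\to+\infty$, contradicting $f(\alpha^{(j)})\le f(\alphat{0})$.
\begin{itemize}
\item From the decomposition \eqref{opt:fdecom}, $f=q f_R+r f_Q$.
\item $R(\alpha^{(j)})\to R(\bar\alpha)$ with all entries bounded, so $\lambda_{\max}(R(\alpha^{(j)}))\le M$ for some constant $M$.
\item $Q(\alpha^{(j)})\succeq \frac{1}{\alpha^{(j)}_{k_0}}F_{k_0}Q_{k_0}F_{k_0}^\top$, which is nonzero since $F_{k_0}\neq0$. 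Hence $\lambda_{\max}(Q(\alpha^{(j)}))\ge c/\alpha^{(j)}_{k_0}\to\infty$ for some $c>0$.
\end{itemize}

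To turn this into a lower bound on $f$, I will use Assumption~\ref{ass:H}. Since $\lambda_{\min}(R\otimes Q)=\lambda_{\min}(R)\lambda_{\min}(Q)>\lambda_{\min}$, both factors are positive, but only their product is controlled. The lower bound on the determinant is
\[
\log\det(R\otimes Q)\;\ge\;\log\lambda_{\max}(R\otimes Q)+(qr-1)\log\lambda_{\min}.
\]
Moreover $\lambda_{\max}(R\otimes Q)=\lambda_{\max}(R)\lambda_{\max}(Q)\ge\lambda_{\min}(R)\lambda_{\max}(Q)$. By Assumption~\ref{ass:H}, $\lambda_{\min}(R)>\lambda_{\min}/\lambda_{\min}(Q)\ge \lambda_{\min}/\lambda_{\max}(Q)$, but this alone only gives $\lambda_{\max}(R\otimes Q)\ge\lambda_{\min}$, which is useless.

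I therefore expect the main obstacle to be a lower bound on $\lambda_{\min}(R(\alpha^{(j)}))$. The first approach I will try is to apply Assumption~\ref{ass:H} directly: use the scaling invariance, or fix a unit vector $x$ with $x^\top Q x$ maximal. The cleanest route is to use the eigenvalue pair $(\lambda_{\min}(R),\lambda_{\max}(Q))$ of $R\otimes Q$. Its product equals $\lambda_{\min}(R)\lambda_{\max}(Q)$, and it is at least $\lambda_{\min}(R)\lambda_{\min}(Q)\cdot\kappa(Q)$, where $\kappa(Q)$ is the condition number. Write $\mu_i$ for the eigenvalues of $R\otimes Q$. There are $q$ eigenvalues of the form $\lambda_{\min}(R)\lambda_i(Q)$, and these are at least $\lambda_{\min}$. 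In addition, the eigenvalue $\lambda_{\max}(R)\lambda_{\max}(Q)\ge\lambda_{\max}(R)\,c/\alpha^{(j)}_{k_0}$, and $\lambda_{\max}(R)\ge\tr R/r=1$ on $\mathcal{S}$.

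So at least one eigenvalue of $R\otimes Q$ diverges like $c/\alpha^{(j)}_{k_0}$, while all eigenvalues remain above $\lambda_{\min}$. This yields
\[
f(\alpha^{(j)})\ge\log(c/\alpha^{(j)}_{k_0})+(qr-1)\log\lambda_{\min}\to\infty,
\]
which is the required contradiction. The normalization $\tr R=r$ is exactly what supplies the bound $\lambda_{\max}(R)\ge1$, and this resolves the obstacle.

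Consequently $\bar\alpha\in\mathbb{R}^K_{++}$, and therefore $\bar\alpha\in\mathcal{S}$. Since $f$ is continuous on $\mathbb{R}^K_{++}$ ($Q$ and $R$ are continuous and positive definite there by Corollary~\ref{cor:QR}), $f(\bar\alpha)=\lim_j f(\alpha^{(j)})\le f(\alphat{0})$. Hence $\bar\alpha\in\Omega$, and $\Omega$ is closed.
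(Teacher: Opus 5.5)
Your proof is correct and follows the same route as the paper. Boundedness comes from $\mathcal{S}$, and closedness is shown by contradiction: a vanishing coordinate forces $\lambda_{\max}(R\otimes Q)\to\infty$, while Assumption~\ref{ass:H} keeps every eigenvalue above $\lambda_{\rm min}$, so $f\to\infty$. The only difference is minor: the paper lower-bounds $R(\beta^{(t)})\otimes Q(\beta^{(t)})$ by the single term $\frac{\beta^{(t)}_j}{\beta^{(t)}_i}(G_j^\top R_jG_j)\otimes(F_iQ_iF_i^\top)$ with $\bar\beta_j>0$, whereas you use $\lambda_{\max}(R\otimes Q)=\lambda_{\max}(R)\lambda_{\max}(Q)$ and $\lambda_{\max}(R)\ge\tr R/r=1$ on $\mathcal{S}$, which gives the same divergence. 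The exploratory detour about lower-bounding $\lambda_{\min}(R)$ is not needed and can be deleted.
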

\begin{proof}
    Since $\mathcal{S}$ is bounded, $\Omega$ is clearly bounded.
    To prove closedness, let $\{\betat\}_{t=1}^\infty \subseteq \Omega$ be a sequence converging to $\bar{\beta}$.
    By continuity, $f(\bar{\beta})\le f(\alphat{0})$ and $\tr(R(\bar{\beta}))=r$ hold,
    so it suffices to show $\bar{\beta}\in\mathbb{R}^K_{++}$.
    % The boundedness of $\Omega$ is obvious since $\mathcal{S}$ is bounded.
    % We now show that $\Omega$ is closed.
    % Suppose a sequence $\{\betat\}_{t=1}^\infty \subseteq \Omega$ converges to $\bar{\beta}\coloneqq\lim_{t\rightarrow\infty}\betat$.
    % We show $\bar{\beta}\in\Omega$.
    % Since $f(\alpha)$ and $\tr(R(\alpha))$ are continuous, $f(\bar{\beta})\le f(\alphat{0})$ and $\tr(R(\bar{\beta}))=r$ hold.
    % Therefore, it suffices to show $\bar{\beta}\in\mathbb{R}^K_{++}$.
    Assume $\bar{\beta}_i = 0$ for some $i$ to derive a contradiction.
    Since the constraint $\tr(R(\bar{\beta})) = r$ holds, there must exist some $j\neq i$ such that $\bar{\beta}_j > 0$.
    Expanding the matrix $P(\betat)\coloneqq R(\betat)\otimes Q(\betat)$, we have
    \begin{align}
        P(\betat) &=\sum_{k=1}^K\sum_{l=1}^K \frac{\betat_l}{\betat_k} (G_l^\top R_lG_l) \otimes (F_kQ_kF_k^\top)\\
        &\succeq \frac{\betat_j}{\betat_i} (G_j^\top R_jG_j) \otimes (F_iQ_iF_i^\top).
    \end{align}
    As $t \rightarrow \infty$, the ratio $\betat_j/\betat_i \rightarrow \infty$.
    By Assumption~\ref{ass:FG}, the right-hand side is non-zero; thus, its maximum eigenvalue diverges to $\infty$ as $t\rightarrow\infty$, which consequently drives the maximum eigenvalue of $P(\betat)$ to $\infty$.
    On the other hand, Assumption~\ref{ass:H} guarantees that all eigenvalues of $P(\betat)$ are bounded from below by $\lambda_{\rm min} > 0$ for any $t$.
    Since the determinant is the product of all eigenvalues, we obtain $\lim_{t \rightarrow \infty} \det(P(\betat)) = \infty$.
    As a result, the objective function diverges, i.e.,
    $\lim_{t \rightarrow \infty} f(\betat) = \lim_{t \rightarrow \infty} \logdet(P(\betat)) = \infty.$
%    \begin{equation}
%        \lim_{t \rightarrow \infty} f(\betat) = \lim_{t \rightarrow \infty} \logdet(P(\betat)) = \infty.
%    \end{equation}
    However, since $\betat \in\Omega$, we must have $f(\betat) \le f(\alphat{0})$ for all $t$, which is a contradiction.
    Thus, the initial hypothesis is false, proving that $\bar{\beta} \in \mathbb{R}^K_{++}$.
    Since $\bar{\beta}\in\Omega$, $\Omega$ is a closed set.
\end{proof}

Furthermore, we show the monotonic decrease of the objective function sequence.

\begin{lem}\label{lem:monotonedecrease}
    Let $\alphat{t}\in\mathcal{S}$ and suppose $\alphat{t+1}\in\mathcal{S}$ is generated according to \eqref{opt:talpha} and \eqref{opt:proj}.
    Then, $f(\alphat{t+1})\le f(\alphat{t})$.
    Moreover, if $\alphat{t}\notin \Gamma$, then $f(\alphat{t+1})< f(\alphat{t})$.
\end{lem}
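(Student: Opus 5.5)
The plan is the standard MM descent chain, followed by a stationarity argument for the strict inequality. First, I will use the scale invariance $f(c\alpha)=f(\alpha)$ together with \eqref{opt:proj} to get $f(\alphat{t+1})=f(\talphat)$. Proposition~\ref{prp:surrogate_function} gives $f(\talphat)\le g(\talphat|\alphat{t})$. Since $\talphat$ minimizes $g(\cdot|\alphat{t})$ over $\mathbb{R}^K_{++}$ by Proposition~\ref{prp:unconstrained_opt}, we have $g(\talphat|\alphat{t})\le g(\alphat{t}|\alphat{t})$. Finally, the tightness statement of Proposition~\ref{prp:surrogate_function} gives $g(\alphat{t}|\alphat{t})=f(\alphat{t})$. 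Chaining these yields $f(\alphat{t+1})\le f(\alphat{t})$.

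For strictness, I argue by contrapositive: assume $f(\alphat{t+1})=f(\alphat{t})$ and show $\nabla f(\alphat{t})=0$. Equality forces $g(\talphat|\alphat{t})=g(\alphat{t}|\alphat{t})$. Because $g(\cdot|\alphat{t})$ is strictly convex and separable with the unique minimizer $\talphat$, this gives $\alphat{t}=\talphat$. Then the first-order condition \eqref{prf:diff_g} holds at $\alphat{t}$, i.e.\ $\nabla g(\alphat{t}|\alphat{t})=0$.

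The remaining step, and the main point to check, is that $\nabla g(\alphat{t}|\alphat{t})=\nabla f(\alphat{t})$, so that $\alphat{t}$ is an unconstrained stationary point and hence lies in $\Gamma$. I will verify this by direct differentiation rather than by the abstract fact that $g-f\ge0$ attains a minimum at the interior point $\alphat{t}$ (which would also work, since $\alphat{t}\in\mathbb{R}^K_{++}$ is open and $g-f$ is differentiable). Using $\partial_{\alpha_k}\logdet R(\alpha)=\tr(R^{-1}G_k^\top R_kG_k)$ and $\partial_{\alpha_k}\logdet Q(\alpha)=-\alpha_k^{-2}\tr(Q^{-1}F_kQ_kF_k^\top)$, I get $\partial_k f(\alphat{t})=\Akt-\Bkt/(\alphat{t}_k)^2$, which matches \eqref{prf:diff_g}.

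It remains to reconcile this with the definition of $\Gamma$ as $\{\alpha\in\mathcal{S}\mid\nabla f(\alpha)=0\}$, which uses the full gradient rather than a gradient projected onto $\mathcal{S}$. This is consistent: by Euler's relation for the scale-invariant function, $\alpha^\top\nabla f(\alpha)=0$, and the argument above yields the full gradient vanishing, which is exactly what $\Gamma$ requires. Since $\alphat{t}\in\mathcal{S}$ by hypothesis, we conclude $\alphat{t}\in\Gamma$, which completes the contrapositive.
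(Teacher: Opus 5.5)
Your proposal is correct and follows essentially the same route as the paper: the MM descent chain $f(\alpha^{(t+1)})=f(\tilde{\alpha}^{(t+1)})\le g(\tilde{\alpha}^{(t+1)}|\alpha^{(t)})\le g(\alpha^{(t)}|\alpha^{(t)})=f(\alpha^{(t)})$, together with the gradient identity $\nabla f(\alpha^{(t)})=\nabla_x g(x|\alpha^{(t)})\rvert_{x=\alpha^{(t)}}$, which you also obtain by direct differentiation. The only difference is in the strictness step: you argue by contrapositive (equality forces $\alpha^{(t)}=\tilde{\alpha}^{(t+1)}$ and hence a vanishing gradient), whereas the paper argues directly that a nonzero gradient prevents $\alpha^{(t)}$ from minimizing $g(\cdot|\alpha^{(t)})$; your Euler-relation remark is harmless but not needed.
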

\begin{proof}
    First, we show that the gradients of $f(\alpha)$ and $g(\alpha|\alphat{t})$ match at $\alpha=\alphat{t}$.
    From the derivative of the log-determinant function~\cite[Lemma 3.2]{ellip:durieu_multi-input_2001}, we have{\small
    \begin{equation}
            \frac{\partial f(\alpha)}{\partial \alpha_k}=q\tr(R(\alpha)^{-1}G_k^\top R_kG_k)
            -\frac{r}{\alpha_k^2}\tr(Q(\alpha)^{-1}F_k Q_kF_k^\top).
    \end{equation}}Also, the derivative of $g(\alpha|\alphat{t})$ is given by \eqref{prf:diff_g}.
    From these equations, we obtain
    % \begin{equation}\label{prf:grad_fg}
        $\nabla f(\alphat{t})=\nabla_x g(x|\alphat{t})\rvert_{x=\alphat{t}}$.
    % \end{equation}

    Now, we show that $f(\alphat{t+1}) \le f(\alphat{t})$.
    Let $\talphat$ be defined by \eqref{opt:talpha}.
    By Proposition~\ref{prp:unconstrained_opt}, $\talphat$ minimizes $g(\alpha|\alphat{t})$, yielding
    $g(\talphat|\alphat{t})\le g(\alphat{t}|\alphat{t})$.
    Furthermore, the surrogate function property in Proposition~\ref{prp:surrogate_function} implies
    \begin{equation}\label{prf:decrease}
        f(\talphat) \le g(\talphat|\alphat{t})\le g(\alphat{t}|\alphat{t}) = f(\alphat{t}).
    \end{equation}
    Because $f(\alpha)$ is invariant to positive scaling, \eqref{opt:proj} ensures $f(\alphat{t+1})=f(\talphat)$.
    Combining this with \eqref{prf:decrease} yields $f(\alphat{t+1})\le f(\alphat{t})$.

    % In particular, when $\alphat{t}\notin \Gamma$, we have $\nabla f(\alphat{t})\neq 0$.
    % Relation \eqref{prf:grad_fg} implies $\nabla_x g(x|\alphat{t})\rvert_{x=\alphat{t}}\neq0$.
    % Thus, $\alphat{t}$ does not minimize $g(\alpha|\alphat{t})$, meaning $\talphat\neq\alphat{t}$ and $g(\talphat|\alphat{t})<g(\alphat{t}|\alphat{t})$.
    % Consequently, $f(\alphat{t+1})< f(\alphat{t})$ follows from \eqref{prf:decrease}.

    In particular, when $\alphat{t}\notin \Gamma$, we have $\nabla f(\alphat{t})\neq 0$.
    Due to the gradient matching property, $\alphat{t}$ does not minimize the function $g(\alpha|\alphat{t})$, meaning $\talphat\neq\alphat{t}$.
    This makes the second inequality in \eqref{prf:decrease} strict, i.e., $g(\talphat|\alphat{t})<g(\alphat{t}|\alphat{t})$, yielding $f(\alphat{t+1})< f(\alphat{t})$.
\end{proof}

Building on these lemmas, we prove Theorem~\ref{thm:convergence}.

\begin{proof}
    By Lemma~\ref{lem:compact} and Lemma~\ref{lem:monotonedecrease}, all $\alphat{t}$ for $t \ge 0$ are contained in the compact set $\Omega$.
    Moreover, the update rule defined by \eqref{opt:talpha} and \eqref{opt:proj} is continuous.
    Given the strict monotonic decrease established in Lemma~\ref{lem:monotonedecrease}, Zangwill's Global Convergence Theorem~\cite[Sec. 7.6]{math:luenberger2021linearnonlinear} is applicable.
    Consequently, the limit of any convergent subsequence of $\{\alphat{t}\}_{t=0}^\infty$ belongs to $\Gamma$.
    If we assume $
        \lim_{t \to \infty} \inf_{\alpha^* \in \Gamma} \|\alpha^{(t)} - \alpha^*\| >\epsilon$ with a positive scalar $\epsilon>0$,
    then there would exist a subsequence that does not approach $\Gamma$, contradicting this fact.
    Thus, \eqref{opt:convergence} holds.

    Furthermore, the sequence $\{f(\alpha^{(t)})\}_{t=0}^\infty$ is bounded from below by Assumption~\ref{ass:H} and monotonically decreases by Lemma~\ref{lem:monotonedecrease}. Therefore, it converges to a finite limit.
\end{proof}

Note that when targeting vector-valued ellipsoids ($r=1$), the optimization problem \eqref{opt:opt_normarized} inherently reduces to minimizing a strongly convex function.
In this case, since a unique global optimum exists, $\Gamma$ becomes a singleton, and the sequence $\alphat{t}$ generated by the proposed algorithm rigorously converges to this global optimum.

\section{Numerical Comparison}\label{sec:numerical}
We compare the proposed Algorithm~\ref{alg:opt_convergence} with two baseline methods for minimizing the volume of the approximating matrix ellipsoid.
For vector-valued ellipsoids, it is known that such volume minimization can be solved as a convex optimization problem.
Therefore, under this specific problem setting $r=1$, we can conduct a fair and rigorous comparison of the proposed method with a nonlinear convex optimization solver and a semidefinite programming (SDP) solver.

\subsection{Baseline Methods}
%\subsection{Vector-Valued Ellipsoid Approximation}
%In this subsection, we consider vector-valued ellipsoids by setting $r=1$.
Without loss of generality, we can fix $G_k^\top R_k G_k=1$.
In this case, the minimization problem \eqref{opt:opt_normarized} reduces to the following nonlinear convex optimization problem:
\begin{align}\label{cmp:nlp}
\textstyle{
   \min_{\alpha\in\mathbb{R}^K_{++}}\logdet(Q(\alpha))\quad \text{s.t.\ } \sum_{k=1}^K \alpha_k=1.
   }
\end{align}
The convexity of the objective function is shown in \cite[Theorem 4.3]{ellip:durieu_multi-input_2001}.
This theorem also provides the analytical gradient and Hessian of the objective function, enabling the problem to be solved using a nonlinear interior-point method.
Alternatively, using the LMI representation in Proposition~\ref{prp:LMI}, the problem can also be formulated as an SDP:
\begin{equation}\label{cmp:sdp}
   \begin{split}
   \textstyle{
       \min_{P\inX{S}{q}_{++},\alpha\inX{R}{K}_+} -\logdet (P)\quad
   \text{s.t.\ } \eqref{apx:LMI_P}, \quad \sum_{k=1}^K \alpha_k=1.
   }
   \end{split}
\end{equation}
The difference in the sign of the objective function arises because of the relation $P=Q^{-1}$ in Proposition~\ref{prp:LMI}.
Although \eqref{cmp:sdp} involves more variables than \eqref{cmp:nlp}, it can be solved using methods that guarantee polynomial-time convergence.

In the next subsection, we compare Algorithm~\ref{alg:opt_convergence} with the following two methods:\\
\textbf{fmincon}: Solves \eqref{cmp:nlp} via MATLAB's nonlinear interior-point method using analytical gradients and Hessians.\\
\textbf{CVX (SDPT3)}: Solves \eqref{cmp:sdp} using the SDPT3 solver~\cite{Toh1999_SDPT3} via the MATLAB optimization interface CVX~\cite{cvx, gb08}\footnote{In SDPT3, using the $\sqrt[n]{\det(\cdot)}$ function is more efficient than using the $\logdet(\cdot)$ function; thus, we modify the objective function accordingly in the numerical experiments (https://cvxr.com/cvx/doc/advanced.html).}.

\subsection{Numerical Examples}
Experiments ran in MATLAB R2024a on an Intel Core i9 (2.8 GHz, 32 GB RAM). We set $F_k = I_q$ and generated $Q_k = N_k N_k^\top + 0.1 I_q$,
%We present the settings and results of the numerical experiments.
% All numerical experiments were conducted in MATLAB R2024a on a standard desktop PC equipped with an Intel Core i9 2.8 GHz processor and 32 GB of RAM.
% The linear transformation matrices were set to $F_k = I_q$. Each matrix $Q_k \in \mathbb{S}^q_{++}$ was randomly generated as $Q_k = N_k N_k^\top + 0.1 I_q$, 
where the elements of $N_k \in \mathbb{R}^{q \times q}$ were drawn from a standard normal distribution.
The initial point $\alphat{0}\in\mathcal{S}$ for Algorithm~\ref{alg:opt_convergence} and fmincon was randomly generated as $\alphat{0}=\beta/\sum_{k=1}^K\beta_k$, where $\beta\inX{R}{K}_{++}$ is a vector whose elements follow a uniform distribution between $0.1$ and $1$.
The computation times reported in the following experiments are the averages over $20$ independent trials.

In the first experiment, we compare the three methods in a small-scale setting with $q=5$ and $K \in \{10, 20, 30, 40, 50\}$.
The results confirm that all three methods converge to equivalent objective values.
A comparison of the computation times required for convergence is shown in Fig.~\ref{fig:smallexp}.
As is evident from the graph, the proposed method and fmincon achieve fast convergence, whereas the computation time for CVX (SDPT3) tend to increase as the number of matrix ellipsoids $K$ increased.
This is attributed to the constraint structure of the SDP \eqref{cmp:sdp}.
In \eqref{cmp:sdp}, not only is the number of variables proportional to $K$, but the dimension of \eqref{apx:LMI_P} is $qK$, which also grows proportionally with $K$.
Solving this with a standard interior-point method requires a computational complexity on the order of $O(K^4)$ or higher~\cite[Sec. 11.8.3]{boyd2004convex}, \cite[Chap. 6]{ben2001lectures}. Thus, computation time increases due to the inflated memory usage and matrix decomposition costs associated with handling massive LMI constraints.

Next, we conduct experiments in a larger-scale setting with $q=50$ and $K \in \{100, 200, 300, 400, 500\}$.
Since CVX (SDPT3) struggles to solve problems of this scale, the comparison is restricted to the proposed method and fmincon.
The computation times for each method are shown in Fig.~\ref{fig:bigexp}. The results show that the computation time of fmincon increases as $K$ grew.
In contrast, the proposed method shows almost no increase in computation time, reaching the optimal solution in an extremely short time even in the large-scale setting.
The excellent scalability of the proposed method with respect to $K$ is attributed to the fact that each solution update is executed in $O(K)$ operations.

% \begin{figure}[t]
%  \centerline{\includegraphics[width=\columnwidth]{small_exp.eps}}
%  \caption{Comparison with fmincon and CVX (SDPT3).}
%  \label{fig:smallexp}
% \end{figure}

% \begin{figure}[t]
%  \centerline{\includegraphics[width=\columnwidth]{big_exp.eps}}
%  \caption{Comparison with fmincon.}
%  \label{fig:bigexp}
% \end{figure}

\begin{figure}[t]
 \begin{minipage}[t]{0.49\columnwidth}
 \vspace{0pt}
  \centerline{\includegraphics[width=\columnwidth]{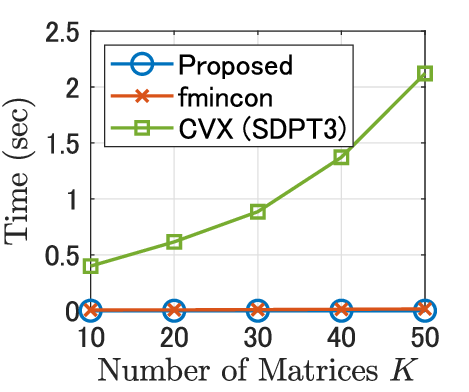}}
  \caption{Comparison with fmincon and CVX (SDPT3).}
  \label{fig:smallexp}
 \end{minipage}
 \hfill
 \begin{minipage}[t]{0.49\columnwidth}
 \vspace{0pt}
  \centerline{\includegraphics[width=\columnwidth]{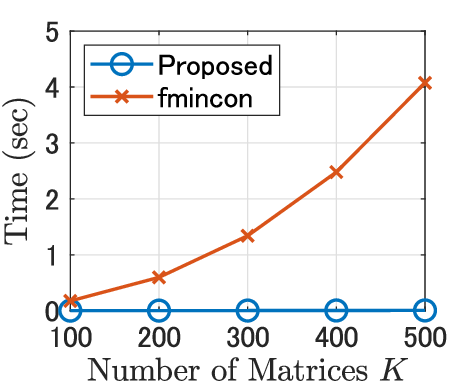}}
  \caption{Comparison with fmincon.}
  \label{fig:bigexp}
 \end{minipage}
\end{figure}

\section{Conclusion}\label{sec:conclusion}
This paper proposed a fast outer-approximation method for the Minkowski sum of matrix ellipsoids. 
By introducing a parameterized family of bounding ellipsoids, we circumvented the computational bottleneck of conventional LMI characterizations. 
We derived an exact analytical solution for minimizing the sum of squared semi-axes, and developed an efficient majorization-minimization algorithm with closed-form updates for volume minimization.
Theoretical analysis of this algorithm ensures monotonic volume decrease and asymptotic convergence to stationary points. Numerical experiments confirmed our method's superior computational speed and scalability over standard interior-point solvers.
% First, we have introduced a parameterized family of bounding matrix ellipsoids to avoid the computational bottleneck of conventional LMI-based characterizations. 
% Next, we have derived an analytical solution for minimizing the sum of squared semi-axes of the approximating ellipsoid. 
% For volume minimization, we have developed an efficient parameter update algorithm based on the majorization-minimization framework, providing closed-form update rules. 
% We have theoretically proved that the algorithm monotonically decreases the volume and that the parameter sequence asymptotically approaches the set of stationary points. 
% Finally, numerical experiments have demonstrated the superior computational speed and scalability of the proposed method compared to standard interior-point solvers.

\section*{Appendix}
The following proposition stems from the concavity of log-determinant function and constitutes an upper bound via its first-order approximation~\cite[Sec. 3.1]{boyd2004convex}.

\begin{aprp}\label{aprp:logdet}
    Let $X, X_0 \in \mathbb{S}^n_{++}$. Then, it holds that
    \begin{equation}
        \logdet(X) \le \logdet(X_0) + \tr(X_0^{-1}(X - X_0)).
    \end{equation}
\end{aprp}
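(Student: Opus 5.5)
The statement to prove is Proposition~\ref{aprp:logdet}. I will derive it from the concavity of $\logdet$ on $\mathbb{S}^n_{++}$, but I will argue directly rather than just cite the first-order characterization of concave functions. The plan is to reduce everything to a scalar inequality on eigenvalues.

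\textbf{Step 1 (whitening).} Set $M \coloneqq X_0^{-\frac{1}{2}} X X_0^{-\frac{1}{2}}$. This is well defined because $X_0 \succ 0$ has a positive definite square root, and $M \succ 0$ because it is a congruence of $X \succ 0$.

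\textbf{Step 2 (rewrite both sides).} By multiplicativity of the determinant,
\begin{equation}
    \logdet(X) = \logdet(X_0) + \logdet(M).
\end{equation}
By cyclicity of the trace,
\begin{equation}
    \tr(X_0^{-1}(X-X_0)) = \tr(M) - n.
\end{equation}
The claim is therefore equivalent to $\logdet(M) \le \tr(M) - n$ for every $M \succ 0$.

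\textbf{Step 3 (eigenvalues).} Let $\lambda_1,\dots,\lambda_n > 0$ be the eigenvalues of $M$. The inequality becomes
\begin{equation}
    \sum_{i=1}^n \log\lambda_i \le \sum_{i=1}^n (\lambda_i - 1).
\end{equation}
This follows termwise from the scalar bound $\log\lambda \le \lambda - 1$ for $\lambda > 0$. That bound holds because $h(\lambda) = \lambda - 1 - \log\lambda$ is convex with $h(1) = 0$ and $h'(1) = 0$, so $1$ is its global minimizer.

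\textbf{Equality case.} Equality requires every $\lambda_i = 1$, that is $M = I_n$, that is $X = X_0$. This also confirms the tightness at $X = X_0$ that is used in Proposition~\ref{prp:surrogate_function}.

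There is no real obstacle in this argument. The only point needing care is Step 2, where the trace term must be rewritten through the symmetric square root so that the scalar inequality can be applied to eigenvalues of a symmetric positive definite matrix.
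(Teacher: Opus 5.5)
Your proof is correct, but it follows a different route from the paper. The paper gives no computation. It cites the concavity of $\logdet$ on $\mathbb{S}^n_{++}$ and the first-order characterization of differentiable concave functions from Boyd and Vandenberghe. Since the gradient of $\logdet$ at $X_0$ is $X_0^{-1}$, the tangent-plane bound $\logdet(X)\le\logdet(X_0)+\tr(X_0^{-1}(X-X_0))$ is then immediate.

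You prove the inequality from scratch instead. The congruence $M=X_0^{-\frac{1}{2}}XX_0^{-\frac{1}{2}}$ reduces the claim to $\logdet(M)\le\tr(M)-n$, which is the scalar bound $\log\lambda\le\lambda-1$ summed over the eigenvalues of $M$. Every step checks out: multiplicativity of $\det$, cyclicity of $\tr$, positivity of the eigenvalues, and convexity of $\lambda-1-\log\lambda$ with its minimum at $1$.

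Your version is self-contained. It needs neither the gradient formula for $\logdet$ nor the general first-order condition, and in effect it unpacks what the citation hides. The textbook proof of concavity of $\logdet$ uses exactly this whitening by $X_0^{-\frac{1}{2}}$. As a bonus, you get that equality holds only at $X=X_0$, from strict convexity of the scalar function. The paper never needs that strictness, since its later uses only require the inequality and equality at $X=X_0$.

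The paper's citation, in turn, is a one-liner and frames the bound as a tangent (first-order) majorizer. That is exactly how it is used to build the MM surrogate $g(\cdot\mid\alpha^{(t)})$, which touches $f$ at $\alpha^{(t)}$ with matching gradient, as exploited in Lemma~\ref{lem:monotonedecrease}.
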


% \addtolength{\textheight}{-12cm}   % This command serves to balance the column lengths
                                  % on the last page of the document manually. It shortens
                                  % the textheight of the last page by a suitable amount.
                                  % This command does not take effect until the next page
                                  % so it should come on the page before the last. Make
                                  % sure that you do not shorten the textheight too much.

%%%%%%%%%%%%%%%%%%%%%%%%%%%%%%%%%%%%%%%%%%%%%%%%%%%%%%%%%%%%%%%%%%%%%%%%%%%%%%%%

%%%%%%%%%%%%%%%%%%%%%%%%%%%%%%%%%%%%%%%%%%%%%%%%%%%%%%%%%%%%%%%%%%%%%%%%%%%%%%%%

%%%%%%%%%%%%%%%%%%%%%%%%%%%%%%%%%%%%%%%%%%%%%%%%%%%%%%%%%%%%%%%%%%%%%%%%%%%%%%%%

% \section*{ACKNOWLEDGMENT}

\bibliographystyle{IEEEtran}
\bibliography{IEEEabrv,Control,Robust_DDC,DDC,ellipsoid,mathematics}

@book{boyd1994linear,
  title={Linear Matrix Inequalities in System and Control Theory},
  author={Boyd, Stephen and El Ghaoui, Laurent and Feron, Eric and Balakrishnan, Venkataramanan},
  year={1994},
  publisher={SIAM}
}

@article{Toh1999_SDPT3,
author = {K. C. Toh and M. J. Todd and R. H. Tütüncü},
title = {SDPT3 — A Matlab software package for semidefinite programming, Version 1.3},
journal = {Optimization Methods and Software},
volume = {11},
number = {1-4},
pages = {545--581},
year = {1999},
publisher = {Taylor \& Francis},
doi = {10.1080/10556789908805762},
}

@misc{cvx,
  author       = {CVX Research, Inc.},
  title        = {{CVX}: Matlab Software for Disciplined Convex Programming, version 2.0},
  howpublished = {\url{https://cvxr.com/cvx}},
  year         = 2012
}

@incollection{gb08,
  author    = {M. Grant and S. Boyd},
  title     = {Graph implementations for nonsmooth convex programs},
  booktitle = {Recent Advances in Learning and Control},
  series    = {Lecture Notes in Control and Information Sciences},
  editor    = {V. Blondel and S. Boyd and H. Kimura},
  publisher = {Springer-Verlag Limited},
  pages     = {95--110},
  year      = 2008,}

@STRING{IEEE_J_AC         = "{IEEE} Trans. Automat. Contr."}

@STRING{IEEE_J_SP         = "{IEEE} Trans. Signal Processing"}

@ARTICLE{DDCQMI:Waarde2022_TAC_origin,
  author={van Waarde, Henk J. and Camlibel, M. Kanat and Mesbahi, Mehran},
  journal=IEEE_J_AC, 
  title={From Noisy Data to Feedback Controllers: Nonconservative Design via a Matrix {S}-Lemma}, 
  year={2022},
  volume={67},
  number={1},
  pages={162-175},
  doi={10.1109/TAC.2020.3047577}
}

@article{DDCQMI:BISOFFI2022_Petersen,
title = {Data-driven control via {P}etersen's lemma},
journal = {Automatica},
volume = {145},
number = {110537},
year = {2022},
issn = {0005-1098},
author = {Andrea Bisoffi and Claudio {De Persis} and Pietro Tesi},
}

@article{DDCQMI:BISOFFI2021,
title = {Trade-offs in learning controllers from noisy data},
journal = {Systems \& Control Letters},
volume = {154},
number = {104985},
year = {2021},
issn = {0167-6911},
author = {Andrea Bisoffi and Claudio {De Persis} and Pietro Tesi},
}

@article{DDCQMI:Waarde2023_siam_qmi,
author = {van Waarde, Henk J. and Camlibel, M. Kanat and Eising, Jaap and Trentelman, Harry L.},
title = {Quadratic Matrix Inequalities with Applications to Data-Based Control},
journal = {SIAM Journal on Control and Optimization},
volume = {61},
number = {4},
pages = {2251-2281},
year = {2023},
}

@ARTICLE{DDCQMI:Waarde2024_TAC_AR,
  author={van Waarde, Henk J. and Eising, Jaap and Camlibel, M. Kanat and Trentelman, Harry L.},
  journal=IEEE_J_AC, 
  title={A Behavioral Approach to Data-Driven Control With Noisy Input-Output Data}, 
  year={2024},
  volume={69},
  number={2},
  pages={813-827},
  doi={10.1109/TAC.2023.3275014}
}

@ARTICLE{DDCQMI:Steentjes2022_Cont_Sys_Let_Covariance,
  author={Steentjes, Tom R. V. and Lazar, Mircea and Van den Hof, Paul M. J.},
  journal={IEEE Control Systems Letters}, 
  title={On Data-Driven Control: Informativity of Noisy Input-Output Data With Cross-Covariance Bounds}, 
  year={2022},
  volume={6},
  number={},
  pages={2192-2197},
  doi={10.1109/LCSYS.2021.3139526}
}

@ARTICLE{DDCQMI:BISOFFI2024_CSL,
  author={Bisoffi, Andrea and Li, Lidong and Persis, Claudio De and Monshizadeh, Nima},
  journal={IEEE Control Systems Letters}, 
  title={Controller Synthesis for Input-State Data With Measurement Errors}, 
  year={2024},
  volume={8},
  number={},
  pages={1571-1576},
  doi={10.1109/LCSYS.2024.3402135}
}

@misc{DDCQMI:kaminaga2025datainformativity,
      title={Data Informativity under Data Perturbation}, 
      author={Taira Kaminaga and Hampei Sasahara},
      year={2025},
      eprint={2505.01641},
      archivePrefix={arXiv},
      primaryClass={math.OC},
      url={https://arxiv.org/abs/2505.01641}, 
}

@INPROCEEDINGS{DDCQMI:Kaminaga2025_ACC,
  author={Kaminaga, Taira and Sasahara, Hampei},
  booktitle={2025 American Control Conference (ACC)}, 
  title={Data Informativity for Quadratic Stabilization under Data Perturbation}, 
  year={2025},
  volume={},
  number={},
  pages={},
  doi={}
}

@article{DDCQMI:Lidong2026,
title = {Controller synthesis from noisy-input noisy-output data},
journal = {Automatica},
volume = {183},
pages = {112545},
year = {2026},
issn = {0005-1098},
doi = {https://doi.org/10.1016/j.automatica.2025.112545},
author = {Lidong Li and Andrea Bisoffi and Claudio {De Persis} and Nima Monshizadeh},
}

@ARTICLE{DDCQMI:Hu2025_RDPC,
  author={Hu, Kaijian and Liu, Tao},
  journal=IEEE_J_AC, 
  title={Robust Data-Driven Predictive Control for Unknown Linear Systems with Bounded Disturbances}, 
  year={2025},
  volume={},
  number={},
  pages={1-16},
  doi={10.1109/TAC.2025.3560697}
}

@article{ellip:durieu_multi-input_2001,
	title = {Multi-Input Multi-Output Ellipsoidal State Bounding},
	volume = {111},
	issn = {1573-2878},
	doi = {10.1023/A:1011978200643},
	number = {2},
	journal = {Journal of Optimization Theory and Applications},
	author = {Durieu, C. and Walter, {\'{E}}. and Polyak, B.},
	year = {2001},
	pages = {273--303},
}

@ARTICLE{ellip:kurzhanskiy_TAC_2007,
  author={Kurzhanskiy, Alex A. and Varaiya, Pravin},
  journal=IEEE_J_AC, 
  title={Ellipsoidal Techniques for Reachability Analysis of Discrete-Time Linear Systems}, 
  year={2007},
  volume={52},
  number={1},
  pages={26-38},
  doi={10.1109/TAC.2006.887900}}

@INPROCEEDINGS{ellip:Halder_CDC_2018,
  author={Halder, Abhishek},
  booktitle={2018 IEEE Conference on Decision and Control (CDC)}, 
  title={On the Parameterized Computation of Minimum Volume Outer Ellipsoid of {M}inkowski Sum of Ellipsoids}, 
  year={2018},
  volume={},
  number={},
  pages={4040-4045},
  doi={10.1109/CDC.2018.8619508}}

@article{ellip:FOGEL_Automatica_1982,
title = {On the value of information in system identification—Bounded noise case},
journal = {Automatica},
volume = {18},
number = {2},
pages = {229-238},
year = {1982},
issn = {0005-1098},
doi = {https://doi.org/10.1016/0005-1098(82)90110-8},
author = {Eli Fogel and Y.F. Huang}
}

@article{ellip:yan_closed-form_2015,
	title = {Closed-form characterization of the {Minkowski} sum and difference of two ellipsoids},
	volume = {177},
	issn = {1572-9168},
	doi = {10.1007/s10711-014-9981-3},
	number = {1},
	journal = {Geometriae Dedicata},
	author = {Yan, Yan and Chirikjian, Gregory S.},
	year = {2015},
	pages = {103--128},
}

@InProceedings{ellip:kurzhanski2000,
author="Kurzhanski, Alexander B.
and Varaiya, Pravin",
editor="Lynch, Nancy
and Krogh, Bruce H.",
title="Ellipsoidal Techniques for Reachability Analysis",
booktitle="Hybrid Systems: Computation and Control",
year="2000",
publisher="Springer Berlin Heidelberg",
address="Berlin, Heidelberg",
pages="202--214",
isbn="978-3-540-46430-3"
}

@book{ben2001lectures,
  title={Lectures on Modern Convex Optimization: Analysis, Algorithms, and Engineering Applications},
  author={Ben-Tal, Aharon and Nemirovski, Arkadi},
  year={2001},
  publisher={SIAM}
}

@book{boyd2004convex,
  title={Convex Optimization},
  author={Boyd, Stephen P and Vandenberghe, Lieven},
  year={2004},
  publisher={Cambridge University Press}
}

@book{math:luenberger2021linearnonlinear,
  title={Linear and Nonlinear Programming},
  author={Luenberger, David G. and Ye, Yinyu},
  year={2021},
  edition={5},
  publisher={Springer},
  series={International Series in Operations Research \& Management Science},
  volume={228},
  doi={10.1007/978-3-030-85450-8},
  isbn={978-3-030-85450-8},
}

@ARTICLE{math:sum2017MMalgo,
  author={Sun, Ying and Babu, Prabhu and Palomar, Daniel P.},
  journal=IEEE_J_SP , 
  title={Majorization-Minimization Algorithms in Signal Processing, Communications, and Machine Learning}, 
  year={2017},
  volume={65},
  number={3},
  pages={794-816},
  doi={10.1109/TSP.2016.2601299}}

\end{document}